\newcommand{\cc}[1]{\mathcal{#1}}
\newcommand \bw {\beta\omega}
\newtheorem{theorem}{Theorem}[section]
\newtheorem{problem}[theorem]{Problem}
\newtheorem{lemma}[theorem]{Lemma}
\newtheorem{prop}[theorem]{Proposition}
\newtheorem{corr}[theorem]{Corollary}
\theoremstyle{definition}
\newtheorem{example}[theorem]{Example}
\newtheorem{remark}[theorem]{Remark}
\theoremstyle{definition}
\newtheorem{definition}[theorem]{Definition}
\title[On points avoiding measures]{On points avoiding measures}
\author{Piotr Borodulin--Nadzieja}
\address[Piotr Borodulin-Nadzieja]{Mathematical Institute, University of Wroc\l aw, \ \ \  pl. Grunwaldzki 2, 50-384 Wroc\l aw, Poland}
\email{pborod@math.uni.wroc.pl}
\author{Artsiom Ranchynski}
\address[Artsiom Ranchynski]{Mathematical Institute, University of Wroc\l aw, \ \ \  pl. Grunwaldzki 2, 50-384 Wroc\l aw, Poland}
\email{artsiom.ranchynski@mimuw.edu.pl}
\thanks{The
first author was supported by the National Science Centre, Poland under the Weave-UNISONO call in the
Weave programme 2021/03/Y/ST1/00124. }
\subjclass[2010]{Primary: 03E05, 03E35. Secondary: 03E75, 28E15.}
\keywords{P-points, weak P-points, strictly positive measures, Stone spaces}
\begin{document}

\begin{abstract} We say that an element $x$ of a topological space $X$ avoids measures if for every Borel measure $\mu$ on $X$ if $\mu(\{x\})=0$, then there is an open $U\ni x$ such that $\mu(U)=0$. The negation of this property can viewed as a local version of the property
	of supporting a strictly positive measure. We study points avoiding measures in the general setting as well
	as in the context of $\omega^\ast$, the remainder of Stone-\v{C}ech compactification of $\omega$.
\end{abstract}

\maketitle

\section{Introduction}

We say that a topological space $X$ is homogenuous if for every $x, y \in X$, there is an autohomeomorphism $h\colon X\to X$ such that $h(x)=y$.
For many years, one of the most important problems in set theoretic topology was to find a concrete reason why $\omega^\ast = \beta\omega\setminus\omega$, the space of non-principal ultrafilters on the set of positive integers $\omega$, is not
homogenuous. Zden\'{e}k Frolik showed \cite{Frolik} that $\omega^\ast$ is not homogenuous, but he has not indicated any points witnessing the lack of homogeneity. So, the problem of finding a 'honest proof' (using the
words
of Eric van Douwen, see \cite{Dow-Mill}) of non-homogeneity was still open.

In 1956 Walter Rudin defined the notion of P-point, see \cite{Rudin}. In the topological setting, a point $x\in X$ is a P-point if every $G_\delta$ containing $x$ has nonempty interior containing $x$. It is easy to construct P-points in $\omega^\ast$ under the assumption of Continuum Hypothesis and
it is also easy to see that there are ultrafilters which are not P-points. So, under CH the problem of witnesses for non-homogeneity of $\omega^\ast$ is settled.

One can construct P-points in $\omega^\ast$ under several axioms and it seems that for some time mathematicians believed that P-points do exist in ZFC. Surprisingly,  Saharon Shelah constructed a model in which $\omega^\ast$ does not
have any P-points, see \cite{Shelah}. So, Rudin's approach does not work without additional axioms.

Kenneth Kunen noticed in \cite{Kunen_some}, that the property of being a P-point can be relaxed to a slightly weaker one: we say that a point $x\in X$ is a weak P-point if $x \notin \overline{C}$ for any countable $C\subseteq X$ which does not
contain $x$. Clearly, $\omega^\ast$
contain many points which are not weak P-points (just take a countable subset of $\omega^\ast$ and consider its closure). Actually, it is still not easy to show that there are weak P-points in ZFC, directly. But, Kunen defined a stronger notion
of
$\mathfrak{c}$-OK-points, and showed that that
$\mathfrak{c}$-OK-points do exist without any additional axioms, \cite{Kunen_weak_p}. So, the problem of finding witnesses of non-homogeneity was finally solved.

In this article, we define a seemingly new topological property which may be interesting in this context. We say that a point $x\in X$ avoids measures, if $x$ is not in the support of any measure $\mu$ such that $\mu(\{x\})=0$. Recall that a support
of a probability (Radon) measure $\mu$ is a smallest
closed set $K$ such that $\mu(K)=1$.
Notice that this property is weaker than the property of being a P-point: if $x$ is a P-point and $\mu(\{x\})=0$, then we may find a sequence of open neighbourhoods $(U_n)$ of $x$ such that $\mu(U_n)<1/n$. But then there is an open set $U \ni x$ such
that $U \subseteq U_n$ for each $n$. So, $\mu(U)=0$ and thus $x$ is not in the support of $\mu$.

We are interested in this property because of two
reasons.

First, we may consider its natural modifications: property of avoiding non-atomic measures and property of avoiding purely atomic measures. It turns out that the property of being a  weak P-point can be stated in the language of measures: a point is
a weak P-point if and only if it avoids purely atomic measures. It is
natural to ask what is the connection between weak P-points and points avoiding non-atomic measures. In Section \ref{general-spaces} we present some results along these lines in the general context. I.e., we show that these properties are orthogonal althought the appropriate witnesses are not always cheap. We show that the Stone space of the measure algebra contains a weak P-point and that certain separable space constructed
by Simon contains points avoiding non-atomic measures. Then, in Section \ref{omegaast}, using these results we present examples of weak P-points which does not avoid measures and vice versa.

The second reason why we study ultrafilters avoiding measures goes back to the question of witnesses for non-homogeneity of $\omega^\ast$.  Kunen's construction of $\mathfrak{c}$-OK-point is quite involved. There is no easier ZFC construction producing
weak P-points known so far. But perhaps there is an easier construction of an ultrafilter avoiding non-atomic measures. Note that we can always define a non-atomic measure on $\omega^\ast$. One can define such a measure explicitly, extending the
asymptotic density along an ultrafilter, see e.g. \cite{Van_Douwen}. Or we
can use a theorem of Rudin saying that a compact space does not admit a non-atomic measure if and only if it is scattered (see \cite{Rudin2} and \cite{Pelcz_Semad}). So, if we are able to show in ZFC that there is an
ultrafilter avoiding non-atomic measures, then we would have another witness for non-homogeneity of $\omega^\ast$. 

While trying to construct a point avoiding non-atomic measures, we came up with a new notion: ultrafilter diagonalizing trees along ideals on $2^\omega$. Using this property we were able to construct ultrafilters avoiding measures, which are not
P-points (at least under $CH$). Then we have realized that this notion coincides with Baumgartner's measure zero ultrafilters. This gives us several examples of ultrafilters avoiding non-atomic measures (using results of Baumgartner and Brendle) but
also shows that this approach cannot work in ZFC (as Shelah proved that consistently there are non measure zero ultrafilters).

In Section \ref{q-points} we show that the property of being a Q-point does not imply avoiding non-atomic measures. The last section is devoted to open problems.

\section{Acknowledgements}

We are grateful to Jan van Mill for reading a preliminary version of this article and for sharing with us the proof of Theorem \ref{Janek}.

\section{Preliminaries}

By a \emph{measure} on a Boolean algebra $\cc{B}$ we will mean a finitely additive probability measure (i.e. such that $\mu(1_{\cc{B}})=1$).
By a measure on a topological space $X$ we will mean a $\sigma$-additive Radon probability measure on $Bor(X)$. In particular, we assume that measures on topological spaces are inner regular with respect to compact sets and outer regular with respect
to open sets.

\begin{prop}
	Every measure $\mu$ on a Boolean algebra $\cc{P}(\omega)/fin$ can be uniquely extended to a ($\sigma$-additive, Radon) measure on the Stone space $St(\cc{P}(\omega)/fin)$.
\end{prop}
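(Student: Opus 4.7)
The plan is to pass through Stone duality. Identify $\cc{P}(\omega)/fin$ with the clopen algebra $Clop(\omega^\ast)$ of the Stone space $\omega^\ast = St(\cc{P}(\omega)/fin)$; under this identification the hypothesis becomes a finitely additive probability $\mu$ on $Clop(\omega^\ast)$. The existence problem then reads: extend $\mu$ to a $\sigma$-additive Radon measure on $Bor(\omega^\ast)$.

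The first step is to observe that $\mu$ is automatically $\sigma$-additive on the clopen algebra. Indeed, suppose $(C_n)_{n\in\omega}$ is a sequence of pairwise disjoint clopens whose union is itself an element of $Clop(\omega^\ast)$. That union is then compact, and $\{C_n\}_{n\in\omega}$ is an open cover of it, so only finitely many $C_n$ can be non-empty. Thus countable additivity reduces to finite additivity on $Clop(\omega^\ast)$ and is automatic.

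Once this is established, Carath\'eodory's extension theorem yields a unique $\sigma$-additive probability $\tilde\mu$ on the $\sigma$-algebra generated by $Clop(\omega^\ast)$, which in a compact zero-dimensional Hausdorff space coincides with the Baire $\sigma$-algebra $Ba(\omega^\ast)$. At this point I would invoke the classical theorem (see e.g.\ Halmos, \emph{Measure Theory}, \S 54) that every Baire probability measure on a compact Hausdorff space extends uniquely to a Radon (in particular, regular Borel) probability; applied to $\tilde\mu$ this produces the desired extension to $Bor(\omega^\ast)$.

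Uniqueness of the Radon extension is then the soft part: any Radon probability on $\omega^\ast$ is determined by its values on $Clop(\omega^\ast)$, because the clopens form a base for the topology closed under complementation and finite intersection, and in a compact zero-dimensional Hausdorff space each compact set is the intersection of a decreasing family of clopen neighbourhoods, so inner regularity pins down the measure on all compact sets and hence (by Radonness) on all Borel sets. The main delicate point in this plan is the Baire-to-Borel extension step, but it is a standard result; in the zero-dimensional setting it can also be given a direct proof by extending $\tilde\mu$ to $Bor(\omega^\ast)$ via outer regularity against open sets written as unions of clopens.
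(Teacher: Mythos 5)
Your argument is correct; note that the paper states this proposition without proof, treating it as a standard fact, and your write-up is precisely the standard argument (compactness kills all nontrivial countable partitions of a clopen into clopens, so $\mu$ is vacuously a premeasure; Carath\'eodory gives the Baire extension; the Baire-to-Radon extension theorem finishes existence). One small imprecision in the uniqueness step: to recover $\mu(K)$ for compact $K$ from the values on clopens you use \emph{outer} regularity (every open set containing $K$ contains a clopen containing $K$, so $\mu(K)=\inf\{\mu(C)\colon K\subseteq C,\ C \text{ clopen}\}$), and only then inner regularity to pass from compact sets to arbitrary Borel sets; with that reading the proof is complete.
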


Hence, there is a bijective correspondence between finitely-additive measures on $\cc{P}(\omega)/fin$ and ($\sigma$-additive, Radon) measures on ${\omega}^{\ast}$.

A measure $\mu$ on a space $X$ is non-atomic if $\mu(\{x\})=0$ for every $x\in X$.

A measure $\mu$ on a Boolean algebra $\cc{B}$ is \emph{non-atomic} (sometimes called also \emph{strongly continuous}) if for every $\varepsilon>0$ there exists a finite partition $\{P_0, \dots, P_n\}$ of unity  with $\mu(P_i)< \varepsilon$ for each
$i\leq n$. Note that a measure $\mu$ on a Boolean algebra $\cc{B}$ is non-atomic if and only if its extension to $St(\mathbb{B})$ is non-atomic.

A measure  on a topological space $X$ is \emph{purely atomic} if $\mu$ is a countable weighted sum of 0-1 measures.

A support of a measure $\mu$ on a space $X$ is $supp(\mu)=\bigcap\{C\subseteq X\colon \text{C is closed and }\mu(C)=1\}$. If the measure is inner regular with respect to compact sets, then $\mu(supp(\mu))=1$, see \cite[411N (d)]{Fremlin}.

\begin{definition}
	A point $x\in X$ \emph{avoids measures}  if whenever $\mu$ is a measure on $X$ such that $\mu(\{x\})=0$, then $x$ does not belong to the support of $\mu$.
\end{definition}

We can consider points avoiding some specific type of measures. We will be particularly interested in avoiding non-atomic measures so it will be convenient to introduce the
following notation.
	
\begin{definition}
A point is a $\mu$-point if it avoids non-atomic measures.
\end{definition}


A topological space has the \emph{countable chain condition} (is ccc, in short) if it does not contain an uncountable family of pairwise disjoint open sets. By $\mathbb{B}$ we denote the \emph{measure algebra}, i.e. the Boolean algebra
$\mathrm{Bor}(2^\omega)/_{\mathcal{N}}$, where $\mathcal{N}$ denotes the $\sigma$-ideal of Lebesgue measure zero sets. For $\kappa\geq \omega $ by $\mathbb{B}_\kappa$ we mean the Boolean algebra $\mathrm{Bor}(2^\kappa)/_{\lambda_\kappa=0}$, where
$\lambda_\kappa$ is the standard Haar measure on $2^\kappa$.

For $A,B \subseteq \omega$, we write $A\subseteq^\ast B$ if $A\setminus B$ is finite and $A=^\ast B$ if $A\triangle B$ is finite. 

The Stone-\v{C}ech compactification of the non-negative integers $\beta \omega= St(\cc{P}(\omega))$ is the Stone space of the Boolean algebra $\cc{P}(\omega)$. 
By $\omega^{\ast}$ we denote the remainder of Stone-\v{C}ech compactification of $\omega$, i.e. $\omega^{\ast}= St(\cc{P}(\omega)/ fin))$.

If $X\subseteq \omega$, then by $[X]$ we will denote the clopen subset of $\beta\omega$ (or of $\omega^\ast$) induced by $X$, i.e. $[X] = \{x\in \beta\omega\colon X \in x\}$. Similarly, if $v\in 2^{<\omega}$, then by $[v]$ we will denote
the clopen subset of $2^\omega$ induced by $v$: $[v] = \{x\in 2^\omega\colon x\mbox{ extends }v\}$.

\begin{definition}
	A point $p \in X$ is a \emph{$P$-point} if for every countable family of neighbourhoods $\{U_n\colon n\in \omega\}$ of $p$ there exists open set $U$ such that $p \in U \subseteq \bigcap \limits_{n} U_n$.
\end{definition}



\begin{theorem}[W. Rudin, \cite{Rudin}]
Under (CH) there exists a $P$-point in $\bw$.
\end{theorem}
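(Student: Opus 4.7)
The plan is to construct, under (CH), a non-principal $P$-point by transfinite recursion of length $\omega_1$, producing a $\subseteq^\ast$-decreasing tower $(A_\alpha)_{\alpha < \omega_1}$ of infinite subsets of $\omega$ whose upward $\subseteq^\ast$-closure will be the desired ultrafilter. Using (CH), I fix enumerations $\{B_\alpha : \alpha < \omega_1\}$ of $\cc{P}(\omega)$ and $\{\cc{P}_\alpha : \alpha < \omega_1\}$ of all countable partitions of $\omega$, writing $\cc{P}_\alpha = \{X_n^\alpha : n \in \omega\}$.

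At stage $\alpha$, the family $(A_\beta)_{\beta < \alpha}$ is a countable $\subseteq^\ast$-decreasing chain, so a standard pseudo-intersection argument (pick a cofinal $\omega$-sequence $\beta_0 < \beta_1 < \cdots$ in $\alpha$ and diagonalize, choosing one new element from each $A_{\beta_n}$) yields an infinite $A' \subseteq \omega$ with $A' \subseteq^\ast A_\beta$ for every $\beta < \alpha$. I replace $A'$ by whichever of $A' \cap B_\alpha$ or $A' \setminus B_\alpha$ is infinite; this step will guarantee that the final filter is an ultrafilter. Then, to diagonalize against $\cc{P}_\alpha$: if some $A' \cap X_n^\alpha$ is infinite, put $A_\alpha := A' \cap X_n^\alpha$; otherwise every $A' \cap X_n^\alpha$ is finite, and I let $A_\alpha$ be an infinite subset of $A'$ meeting each $X_n^\alpha$ in at most one point.

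Setting $\cc{U} := \{B \subseteq \omega : A_\alpha \subseteq^\ast B \text{ for some } \alpha < \omega_1\}$, the first dichotomy in the construction shows $\cc{U}$ is an ultrafilter; non-principality can be enforced by a minor bookkeeping adjustment forcing $\min A_\alpha \to \infty$ along a club. For the P-point property, I invoke the standard characterization: an ultrafilter is a P-point iff for every countable partition $\cc{P}$ of $\omega$, either some piece of $\cc{P}$ belongs to $\cc{U}$ or there exists $V \in \cc{U}$ almost disjoint from every piece. Given such a $\cc{P} = \cc{P}_\alpha$, stage $\alpha$ delivers exactly such a witness $A_\alpha$, proving the property.

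The main obstacle is the pseudo-intersection step at countable limit stages, which must preserve infiniteness throughout the entire tower; this is the only place where infinitude could collapse, and it hinges on the fact that every countable $\subseteq^\ast$-decreasing chain of infinite sets admits an infinite pseudo-intersection. The essential role of (CH) is twofold: it guarantees that $\omega_1$ many steps suffice to exhaust $\cc{P}(\omega)$ and all its countable partitions, and, since all initial segments of the recursion are countable, it ensures that the pseudo-intersection construction is always available at the current stage.
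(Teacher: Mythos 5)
Your construction is correct and is the classical CH argument for Rudin's theorem; the paper only cites the result without proof, and your route is exactly the standard one (a $\subseteq^\ast$-decreasing tower of length $\omega_1$ that decides every subset of $\omega$ and diagonalizes every countable partition, using at each countable stage the existence of an infinite pseudo-intersection, and then the partition characterization of P-points in $\omega^\ast$). One small simplification: the extra bookkeeping for non-principality is unnecessary, since $\cc{U}$ is defined via $\subseteq^\ast$ and therefore automatically contains the Fr\'echet filter.
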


It is also known that there is a model without $P$-points, see \cite{Shelah}.
However, in ZFC there exist points with slightly weaker property.

\begin{definition}
	A point $p \in X$ is a \emph{weak $P$-point} if $p$ is not a limit point of any countable subset of $X\setminus \{p\}$.  \end{definition}

We say that a closed subset $F\subseteq X$ is a \emph{weak $P$-set} if whenever $C$ is a countable set disjoint with $F$, we have
	$\overline{C} \cap F = \emptyset$.

\section{Results about general topological spaces}\label{general-spaces}

We say that a compact space $K$ supports a measure $\mu$ if $\mu(U)>0$ for every nonempty open $U\subseteq K$ (or, equivalently, if $K$ is the support of $\mu$). 
The property of \emph{not} avoiding measures is in a sense the \emph{local} version of the property of supporting a measure. 
E.g. if a space supports a measure, then of course none of its point can avoid measures. 
We will show now that this implication cannot be reversed, i.e. that there is a space which does not support a measure, but which does not contain $\mu$-points, and so \emph{locally} it supports a measure everywhere.

\begin{prop}\label{strictly-positive}
There is a space without strictly positive measure, and without points avoiding measures.
\end{prop}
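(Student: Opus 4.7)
The plan is to take $X$ to be the topological disjoint sum $\bigsqcup_{\alpha < \omega_1} K_\alpha$ of $\omega_1$-many homeomorphic copies of the Cantor cube $2^\omega$. Each $K_\alpha$ is then clopen in $X$ and carries its standard Haar probability measure $\nu_\alpha$, which is non-atomic and strictly positive on $K_\alpha$.

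The first step is to verify that $X$ carries no strictly positive measure. Let $\mu$ be any Radon probability measure on $X$. Since the $K_\alpha$ are pairwise disjoint and clopen, finite additivity gives $\sum_{i=1}^n \mu(K_{\alpha_i}) = \mu\bigl(\bigcup_{i=1}^n K_{\alpha_i}\bigr) \leq 1$ for every finite choice of indices, so $\{\alpha : \mu(K_\alpha) > 1/n\}$ has cardinality at most $n$ and hence $\{\alpha : \mu(K_\alpha) > 0\}$ is countable. Since $\omega_1$ is uncountable, there is some $\beta$ with $\mu(K_\beta)=0$; being a nonempty open subset of $X$, it witnesses that $\mu$ is not strictly positive.

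The second step is to verify that no $x \in X$ avoids measures. Fix $x$ and let $\alpha$ be the unique ordinal with $x \in K_\alpha$. Transport $\nu_\alpha$ to a Radon probability measure $\mu$ on $X$ by setting $\mu(B) = \nu_\alpha(B \cap K_\alpha)$ for $B \in \mathrm{Bor}(X)$; since $K_\alpha$ is compact, $\mu$ is automatically inner regular. Then $\mu(\{x\}) = 0$ because $\nu_\alpha$ is non-atomic, while $\mathrm{supp}(\mu) = K_\alpha \ni x$, because every nonempty relatively open subset of $K_\alpha$ has positive Haar measure. Hence $x$ lies in the support of a measure that vanishes on $\{x\}$.

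I do not expect any real obstacle here; the construction essentially repackages the fact that $2^\omega$ supports a non-atomic measure, combined with the observation that a finite probability measure on an uncountable clopen partition must assign zero to most pieces. If one prefers a compact witness, one can replace $X$ by its one-point compactification $X^\ast = X \cup \{\infty\}$: the argument for points other than $\infty$ is unchanged, while for $\infty$ one picks pairwise distinct $\alpha_n < \omega_1$ and considers $\mu = \sum_n 2^{-n} \nu_{\alpha_n}$, noting that $\mu(\{\infty\}) = 0$ whereas every neighbourhood of $\infty$ in $X^\ast$ contains all but finitely many of the $K_{\alpha_n}$ and so has positive $\mu$-measure.
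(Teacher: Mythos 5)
Your proposal is correct and follows essentially the same approach as the paper: the paper takes the one-point compactification of a disjoint union of $\omega_1$ copies of $[0,1]$ with their Lebesgue measures, observes that the failure of ccc rules out a strictly positive measure, and handles $\infty$ via the weighted sum $\sum_n 2^{-n}\lambda_n$ exactly as in your final remark. Replacing $[0,1]$ by $2^\omega$ and spelling out the counting argument behind the ccc obstruction are only cosmetic differences.
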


\begin{proof}
	Let $X_\alpha=[0,1]$ for every $\alpha\in \omega_1$ and let $X= \bigcup_{\alpha \in \omega_1} X_\alpha$ be the disjoint union of those intervals. Let $K = X \cup \{\infty\}$ be the one-point compactification of $X$. 
	
Clearly, $K$ does not satisfy $ccc$ and so there is no measure $\mu$ such that $supp(\mu)=K$.

For every $\alpha \in \omega_1$ let $\lambda_\alpha$ be the Lebesgue measure supported by $X_\alpha$ and let $\lambda_\omega$ be defined by 
	\[ \lambda_\omega = \sum_{n \in \omega}\frac{1}{2^n}\lambda_n. \]
For every point $x_0 \in X$ there is $\alpha\in \omega_1$ such that $x_0 \in X_\alpha$ and so $x_0\in \mathrm{supp}(\lambda_\alpha)$. 
	
	By the definition of the one
	point compactification, for every open $U\subseteq K$, if $\infty \in U$ then $X_\alpha \subseteq U$ for cofinitely many $\alpha \in \omega_1$. Hence $\infty \in \overline{(\bigcup_{n\in \omega}X_n)}$ and so $\infty \in
	\mathrm{supp}(\lambda_\omega)$.
\end{proof}

We proceed with a fact which shows that the notion of avoiding measures is, in a sense, generalization of the P-point property.

\begin{prop}\label{P-point} If $p$ is a P-point, then $p$ avoids measures.
\end{prop}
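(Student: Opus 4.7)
The plan is essentially the sketch the authors already gave in the introduction, carried out carefully. Fix a measure $\mu$ on $X$ with $\mu(\{p\})=0$; we want to produce an open neighbourhood of $p$ with $\mu$-measure zero, since then its complement is a closed set of full $\mu$-measure missing $p$, which witnesses $p \notin \mathrm{supp}(\mu)$.

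The first step is to approximate $\{p\}$ from outside by open sets. Because measures on $X$ are assumed to be Radon, and in particular outer regular with respect to open sets, and $\mu(\{p\})=0$, for every $n \in \omega$ there exists an open $U_n \ni p$ with $\mu(U_n) < 1/n$. This gives a countable family of open neighbourhoods of $p$ whose measures tend to zero.

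Now I invoke the P-point property: the countable family $\{U_n : n \in \omega\}$ of neighbourhoods of $p$ yields an open set $U$ with $p \in U \subseteq \bigcap_n U_n$. Monotonicity of $\mu$ gives $\mu(U) \le \mu(U_n) < 1/n$ for every $n$, so $\mu(U) = 0$. Hence $X \setminus U$ is closed, contains $\mathrm{supp}(\mu)$ (since $\mu(X \setminus U) = 1$ and the support is the intersection of all closed sets of full measure), and does not contain $p$. Therefore $p \notin \mathrm{supp}(\mu)$, as required.

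There is no real obstacle here; the only thing to be careful about is that the argument uses outer regularity, which is built into the preliminaries' standing assumption that measures on topological spaces are Radon. Without that assumption one could not guarantee the existence of the shrinking open neighbourhoods $U_n$, and the proof would break down.
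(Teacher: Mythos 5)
Your argument is correct and is essentially the paper's own proof: outer regularity gives open neighbourhoods $U_n \ni p$ with $\mu(U_n)<1/n$, the P-point property gives $U\ni p$ inside their intersection, and $\mu(U)=0$ forces $p\notin\mathrm{supp}(\mu)$. Your closing observation about the necessity of outer regularity matches the remark the authors make immediately after the proposition (the Dieudonn\'e measure on $\omega_1+1$).
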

\begin{proof}
	If $p$ is a P-point and $\mu$ is a measure such that $\mu(\{p\})=0$, then we can find a decreasing sequence $(U_n)$ of open neighborhoods of $p$ such that $\mu(U_n)<1/n$. There is $U\ni p$ such that $U\subseteq U_n$ for every $n$ and so $\mu(U)=0$. Hence $x\notin \mathrm{supp}(\mu)$.
\end{proof}

\begin{remark}
Although we have promised to deal only with Radon measures, when talking about measures on topological spaces, let us make a remark that the above proposition may be no longer true, if $\mu$ is not outer regular. Indeed, consider $X=\omega_1+1$,
	$p=\omega_1$, and let $\mu$ be the Dieudonné's measure (see \cite[411Q]{Fremlin}). Then $p$ is a P-point, $supp(\mu)=\{p\}$ and $\mu(\{p\})=0$. 
\end{remark}

Since the notion of weak P-point also generalized P-pointness, it is natural to ask how the points avoiding measures and weak P-points are related. It turns out that weak P-points can be defined in terms of avoiding measures.

\begin{prop} A point $p$ is a weak P-point if and only if it avoids purely atomic measures.
\end{prop}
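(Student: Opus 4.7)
The plan is to prove both directions directly, with the forward direction being essentially a translation and the reverse direction requiring a simple explicit construction of a measure.

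For the forward implication, I would start by assuming $p$ is a weak P-point and taking an arbitrary purely atomic measure $\mu$ on $X$ with $\mu(\{p\}) = 0$. Writing $\mu = \sum_{n} c_n \delta_{x_n}$ with $c_n > 0$ as guaranteed by the definition of purely atomic, the hypothesis $\mu(\{p\}) = 0$ forces $x_n \neq p$ for every $n$. So $C = \{x_n : n \in \omega\}$ is a countable subset of $X \setminus \{p\}$, and the weak P-point property gives an open $U \ni p$ with $U \cap C = \emptyset$. Then $\mu(U) = \sum_{x_n \in U} c_n = 0$, which witnesses $p \notin \mathrm{supp}(\mu)$.

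For the reverse implication, I would argue contrapositively. Suppose $p$ is not a weak P-point, so fix a countable $C = \{x_n : n \in \omega\} \subseteq X \setminus \{p\}$ with $p \in \overline{C}$. Define
\[ \mu = \sum_{n \in \omega} \frac{1}{2^{n+1}} \delta_{x_n}, \]
which is manifestly a purely atomic probability measure (a countable weighted sum of $0$--$1$ measures is automatically Radon since it concentrates on a countable set). Since each $x_n \neq p$, we have $\mu(\{p\}) = 0$. On the other hand, for every open $U \ni p$ the condition $p \in \overline{C}$ produces some $x_n \in U$, whence $\mu(U) \geq 2^{-(n+1)} > 0$; this shows $p \in \mathrm{supp}(\mu)$. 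Thus $p$ fails to avoid the purely atomic measure $\mu$.

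There is no real obstacle here: the only mild point of care is to make sure that $0$--$1$ Radon probability measures are Dirac measures (so that "purely atomic" really means a countable sum of point masses), but in any Hausdorff space this follows from a standard separation argument using the minimality of the support. Apart from that, each direction is essentially a one-line matching between a countable "bad" set $C$ witnessing the failure of the weak P-point property and a purely atomic measure supported on $C$.
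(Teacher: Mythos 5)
Your proof is correct and follows essentially the same route as the paper's: both directions reduce to the same matching between a countable set $C$ witnessing failure of the weak P-point property and the measure $\sum_n 2^{-(n+1)}\delta_{x_n}$ supported on $C$, together with the observation that inner regular $0$--$1$ measures are Dirac measures. The only difference is organizational (you state one direction directly and one contrapositively, while the paper phrases both contrapositively), which is immaterial.
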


\begin{proof} 
	Notice that since every inner regular 0-1 measure can be expressed as a Dirac delta measure, $\mu$ on a topological space $X$ is purely atomic if and only if it is of the form $\mu=\sum_{n\in \omega}a_n \delta_{x_n}$ for some $a_n\geq0$, $x_n\in X$, $\sum a_n = 1$.

Suppose that $p$ is not a weak P-point and so that there is $C=\{c_n \colon n\in \omega\}\subseteq X\setminus \{p\}$ such that $p\in \overline{C}$. Define $\mu=\sum_n 2^{-n}\delta_{c_n}$. Then $\mu(\{p\})=0$, and if $U$ is a neighbourhood of $p$, then there is $n_0$ such
	that $c_{n_0} \in U$. Hence $\mu(U)\geq 2^{-n_0}\delta_{c_{n_0}}(U)=2^{-n_0}>0$ and so $p \in supp(\mu)$.

	Suppose that $p$ does not avoid purely atomic measures, i.e. there is $\mu=\sum_n a_n\delta_{c_n}$ for appropriate $(a_n)$ and $(c_n)$ such that $\mu(\{p\})=0$ and $p\in supp(\mu)$. As $\mu(\{p\})=0$ we have that $p\neq c_n$ for every $n\in
	\omega$. $p\in supp(\mu)$, for every neighbourhood $U$ of $p$ we have $\mu(U)>0$. Hence, there is $n$ such that $\delta_{c_n}(U)>0$ and so $c_n \in U$. Finally, $p\in \overline{C}$, where $C=\{c_n \colon \ n \in \omega\}$.
\end{proof}

If $X$ is a scattered space, then  by \cite{Rudin2} every measure on $X$ is purely atomic. So, if $X$ is scattered then $x\in X$ avoids measures if and only if $x$ is a weak P-point.

\begin{example} Consider $X = \omega_1+1$ as the ordered topological space. It is scattered, so every measure on $X$ is purely atomic. Hence, $\omega_1$ (as a point in $X$) avoids measures.
\end{example}

However, there are both examples of spaces containing weak P-points but no $\mu$-points and vice versa.

First of all, notice the following simple fact.

\begin{prop}\label{sep} If $K$ is separable, and $x\in K$ is not isolated, then $x$ is not a weak P-point.
\end{prop}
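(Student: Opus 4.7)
The plan is to exhibit an explicit countable set $C \subseteq K \setminus \{x\}$ with $x \in \overline{C}$, which directly contradicts $x$ being a weak P-point.

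Since $K$ is separable, fix a countable dense subset $D = \{d_n : n \in \omega\}$ of $K$ and simply take $C = D \setminus \{x\}$. This is countable and disjoint from $\{x\}$, so the only thing to verify is that every open neighborhood $U$ of $x$ meets $C$.

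Let $U \ni x$ be open. Because $x$ is not isolated, $\{x\}$ is not an open set, so there is some $y \in U$ with $y \neq x$; in the Hausdorff setting used throughout the paper, $\{x\}$ is closed and hence $U \setminus \{x\}$ is itself a nonempty open set. Density of $D$ then gives a point of $D$ in $U \setminus \{x\}$, i.e.\ a point of $C$ in $U$. Thus $x \in \overline{C}$, and $x$ is not a weak P-point.

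There is essentially no obstacle here: the only subtlety is the implicit separation hypothesis needed to conclude that $U \setminus \{x\}$ is open (so that density can be applied to it). Under the standing convention that the spaces under consideration are at least $T_1$, this is automatic, and the argument is a two-line consequence of separability plus the definition of an isolated point.
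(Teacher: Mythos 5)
Your proof is correct; the paper states Proposition \ref{sep} as a ``simple fact'' without any proof, and your argument --- taking $C = D\setminus\{x\}$ for a countable dense $D$ and using non-isolatedness together with the Hausdorff (indeed $T_1$) hypothesis to see that $U\setminus\{x\}$ is a nonempty open set meeting $D$ --- is exactly the intended elementary verification. Your remark about the implicit $T_1$ assumption is a fair and accurate observation, and it is harmless here since all spaces in the paper are compact Hausdorff.
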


Now we will show that the Stone space of the measure algebra contains a weak P-point. We are grateful to Jan van Mill for presenting us the following proof. It is an easy consequence of his theorem but it seems that it was not stated explicitly in
the literature. Of course none of the points in the Stone space of the measure algebra can avoid the Lebesgue measure. So, it does not contain a $\mu$-point.

Recall that if $K$ is the Stone space of the measure algebra $\mathbb{B}$ and $\lambda$ is the standard measure on $K$, then $\lambda$ is inner (and outer) regular with respect to clopen subsets. Also, $\mathbb{B}$ is homogeneous, in the sense that $\{A\in
\mathbb{B}\colon A\subseteq B\}$ is isomorphic to $\mathbb{B}$ for each non-zero $B\in \mathbb{B}$. Consequently, if $L \subseteq K$ is separable, then $\lambda(L)=0$ (as $K$ is not separable). For basic properties of $K$ and $\lambda$, see e.g.
\cite[Section 24]{Halmos}.

\begin{theorem}\label{Janek} There is a weak P-point in the Stone space of the measure algebra $\mathbb{B}$.
\end{theorem}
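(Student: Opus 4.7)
The plan is to deduce the theorem as an easy corollary of a general existence result of van Mill on weak P-points in compact F-spaces of weight at most $\mathfrak{c}$ with no isolated points. The actual work is then just to verify that $K = St(\mathbb{B})$ meets these hypotheses.

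First, I would check that $K$ is an F-space. In fact $K$ is extremally disconnected, because $\mathbb{B}$ is a \emph{complete} Boolean algebra: countable joins exist since $\mathcal{N}$ is a $\sigma$-ideal, and arbitrary joins reduce to countable ones by the ccc of $\mathbb{B}$ (witnessed by $\lambda$). Under Stone duality, completeness of $\mathbb{B}$ corresponds to extremal disconnectedness of $K$, and every extremally disconnected compact Hausdorff space is an F-space.

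Next, I would record the remaining structural properties: $|\mathbb{B}| = \mathfrak{c}$ (as $\mathbb{B}$ is a quotient of $\mathrm{Bor}(2^\omega)$), so the weight of $K$ is $\mathfrak{c}$; $\mathbb{B}$ is atomless (since $\lambda$ is non-atomic), so $K$ has no isolated points; and, as noted in the preamble, $K$ is not separable, because $\lambda(L)=0$ for every separable $L\subseteq K$ while $\lambda(K)=1$. With these properties in place, van Mill's theorem applies and yields a weak P-point in $K$.

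The main obstacle is simply the ``translation'' from the usual setting of van Mill's theorem (typically phrased for $\omega^\ast$ or for compact F-spaces with additional combinatorial hypotheses) to the Stone space of the measure algebra; once the hypotheses are checked, no further construction is required. This, presumably, is precisely why the authors describe the result as an ``easy consequence'' of van Mill's theorem that has not been stated explicitly for $St(\mathbb{B})$ in the literature.
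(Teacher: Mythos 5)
The structural facts you verify about $K=\mathrm{St}(\mathbb{B})$ --- extremal disconnectedness, weight $\mathfrak{c}$, no isolated points, non-separability --- are all correct, but the theorem you want to deduce the result from does not exist: there is no ZFC result of van Mill (or anyone else) asserting that every compact F-space of weight at most $\mathfrak{c}$ with no isolated points contains a weak P-point. That statement is in fact false. Consider the Stone space of the Cohen algebra $\mathrm{RO}(2^\omega)$ (the absolute of the Cantor set): it is compact, extremally disconnected, atomless, and of weight $\mathfrak{c}$, yet it is separable, because the countable clopen algebra of $2^\omega$ is dense in $\mathrm{RO}(2^\omega)$; and a separable compact space without isolated points has no weak P-points at all (Proposition \ref{sep}). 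So ``F-space of weight $\le\mathfrak{c}$ with no isolated points'' cannot suffice, and non-separability cannot merely be ``recorded'' as an extra structural property --- it must enter the construction quantitatively. Note also that Dow's theorem, the one genuine off-the-shelf existence result for weak P-points in F-spaces, requires weight strictly bigger than $\mathfrak{c}$, so it does not apply to $\mathrm{St}(\mathbb{B})$ either; this is precisely why the authors say the statement was not available in the literature.

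What the paper actually uses is van Mill's filter-extension theorem \cite[Theorem 2.5]{vanMill}: for a locally compact, $\sigma$-compact $X=\bigcup_n C_n$ (here the $C_n$ are disjoint clopens with dense union in $K$, so $K=\beta X$ by extremal disconnectedness) and a filter of clopen sets whose traces on each $C_n$ form an $n$-linked family $\mathcal{F}_n$, there is an ultrafilter extending the filter which is a weak P-point of $X^\ast$. The measure $\lambda$ is essential at two separate points: first, the families $\mathcal{F}_n=\{F\subseteq C_n \mbox{ clopen}\colon \lambda(C_n\setminus F)<\lambda(C_n)/(n+1)\}$ are $n$-linked precisely because of the measure bound; second, for a countable $N\subseteq X$ one must produce a member of the filter missing $\overline N$, and this is where non-separability is used in the strong form $\lambda(\overline N)=0$ together with regularity of $\lambda$ with respect to clopens. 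Your proposal verifies only the easy hypotheses and omits exactly this construction, which is the substance of the proof.
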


\begin{proof} Let $K = \mathrm{St}(\mathbb{B})$.

Let $(C_n)$ be a family of pairwise disjoint clopens of $K$ such that $\bigcup_n C_n$ is dense in $K$. Let $X=\bigcup_n C_n$. For $n\in \omega$ let $\mathcal{F}_n$ be the family defined by 
	\[ \mathcal{F}_n = \{F\subseteq C_n\colon F\mbox{ is clopen and } \lambda(C_n \setminus F)< \lambda(C_n)/(n+1)\} \]
and notice that for each $n$ the family $\mathcal{F}_n$ is $n$-linked.

Let 
	\[ \mathcal{F} = \{F \subseteq X \colon F\cap C_n \in \mathcal{F}_n \mbox{ for each }n\}. \]
	Then $\mathcal{F}$ is a filter on the Boolean algebra of clopen subsets of $X$ and by \cite[Theorem 2.5]{vanMill} it can be extended to an ultrafilter $x\in \beta X$ which is a weak P-point in $X^\ast$.

	Since $K$ is extremely disconnected, we have $K = \beta X$ and so $x\in K$. Now, suppose that $N$ is a countable subset of $K$. We want to show that $x\in \overline{N}$ and we may assume that either $N\subseteq X$ or $N\subseteq K\setminus X =
	X^\ast$. Suppose, first, that  $N\subseteq X^\ast$. Then $x\notin
	\overline{N}$ as $x$ is a weak P-point of $X^\ast$. If $N\subseteq X$, then there is $F \in \mathcal{F}$ such that $\overline{N}\cap F
	= \emptyset$. Indeed, $\lambda(\overline N)=0$ (see the remarks above) and so, by regularity of $\lambda$ with respect to clopens, for every $n$ we can pick $F_n\in \mathcal{F}_n$ such that $F_n \cap \overline{N}=\emptyset$. Then $F = \bigcup_n
	F_n \in \mathcal{F}$ and $F \cap \overline{N}=\emptyset$. Hence $x\notin \overline{N}$.  
\end{proof}

\begin{theorem} There is a space $X$ which is separable (and so, there is no weak P-point in $X$) but which contains $\mu$-point (in fact, a dense set of $\mu$-points).
\end{theorem}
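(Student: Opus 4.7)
The plan is to exhibit, following Simon, a separable compact Hausdorff space $X$ with no isolated points together with a countable dense subset $D$ consisting of $\mu$-points. Given such an $X$, the \enquote{no weak P-points in $X$} clause is automatic: separability provides some countable dense set, and since every point of $X$ is non-isolated, Proposition \ref{sep} rules out weak P-points at every point.

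For the construction, I would invoke Simon's compactification of a countable discrete set $D = \{d_n : n \in \omega\}$ obtained by attaching a non-scattered remainder $R$ arranged so that (a) every $d_n$ is accumulated by points of $R$, ensuring that $X$ has no isolated points, and (b) each $d_n$ admits a rich family of clopen neighborhoods $\{U^n_\alpha : \alpha < \omega_1\}$ whose pairwise intersections $U^n_\alpha \cap U^n_\beta$ (for $\alpha \neq \beta$) lie in a subset of $R$ that is null for every non-atomic Radon measure on $X$. This is typically carried out by a transfinite recursion of length $\omega_1$, at each stage refining the neighborhood basis while maintaining the small-intersection condition.

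To verify that each $d_n \in D$ is a $\mu$-point, I would argue by contradiction. Suppose $\mu$ is a non-atomic Radon measure on $X$ with $d_n \in \mathrm{supp}(\mu)$. Then $\mu(U^n_\alpha) > 0$ for every $\alpha < \omega_1$. But by construction $\mu(U^n_\alpha \cap U^n_\beta) = 0$ for $\alpha \neq \beta$, so $\{U^n_\alpha : \alpha < \omega_1\}$ is an uncountable family of pairwise $\mu$-almost-disjoint sets of positive $\mu$-measure. This is impossible in a finite measure space by the standard pigeonhole argument: some threshold $1/k$ is exceeded by $\mu(U^n_\alpha)$ for uncountably many $\alpha$, and extracting a finite $\mu$-disjoint subfamily of total mass exceeding $1$ yields a contradiction. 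Hence $d_n \notin \mathrm{supp}(\mu)$, so $d_n$ is a $\mu$-point; density of $D$ gives the dense set of $\mu$-points.

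The main obstacle is the inductive construction of $X$: one must simultaneously provide enough neighborhoods to make each $d_n$ non-isolated, keep their pairwise intersections null for \emph{every} non-atomic Radon measure (not only those enumerated during the recursion), and preserve Hausdorff compactness throughout. The delicate ingredient is the choice of the remainder $R$ so that non-atomic Radon measures on $X$ have limited flexibility, and a bookkeeping of the clopen pieces used at each stage so that the $\omega_1$-recursion closes up while blocking every potential non-atomic measure at once.
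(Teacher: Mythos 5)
There is a genuine gap: your proof reduces everything to a construction that you do not carry out and that, as described, is not what Simon's space provides. You require each $d_n$ to have $\omega_1$ many clopen neighbourhoods whose pairwise intersections are null for \emph{every} non-atomic Radon measure on $X$. You yourself flag the difficulty --- a recursion of length $\omega_1$ cannot enumerate the (up to $2^{\mathfrak{c}}$ many) non-atomic Radon measures, so one must ``block every potential non-atomic measure at once'' --- but you offer no mechanism for doing so, and this is the entire content of the theorem. The pigeonhole step at the end (an uncountable family of positive-measure sets with pairwise null intersections is impossible for a finite measure) is fine, but it rests on an unestablished foundation. What Simon actually constructs is a separable space $X$ with a countable, dense, dense-in-itself subset $H$ that is a \emph{P-set} in $X$; no measure-theoretic side conditions are built into the construction.

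The paper's argument from that point is much shorter and purely topological: if $\mu$ is non-atomic then $\mu(H)=0$ because $H$ is countable, so by outer regularity one picks open sets $U_n\supseteq H$ with $\mu(U_n)<1/n$, and the P-set property of $H$ yields an open $U$ with $H\subseteq U\subseteq\bigcap_n U_n$, whence $\mu(U)=0$ and no point of $H$ lies in $\mathrm{supp}(\mu)$ (this is exactly the argument of Proposition \ref{P-point} applied to the set $H$ rather than to a single point). Density of $H$ gives a dense set of $\mu$-points, and separability (together with $X$ having no isolated points, which follows from $H$ being dense and dense-in-itself) rules out weak P-points. If you replace your hypothetical almost-disjoint refinement system with the P-set property of $H$, your proof collapses to the paper's.
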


\begin{proof} By \cite{Simon_p_set}, see also \cite{Dzam_Pleb} there is a separable space $X$ with a countable, dense, dense-in-itself subset $H\subseteq X$, which is a P-set in X.
Given a non-atomic measure $\mu$, since $H$ is countable, we have $\mu(H)=0$. By the same argument as in the proof of Proposition \ref{P-point} we can find a neighbourhood $U$ of $H$ such that $\mu(U)=0$. Hence every point of $H$ is a $\mu$-point. Since $X$ is separable, there are no weak P-points.
\end{proof}

\begin{remark}\label{betaomega} In fact there is a 'simpler' (or, at least, more classical) example of a space containing a $\mu$-point which is not a weak P-point. Namely, $\beta\omega$ contains a non-isolated point which is a $\mu$-point (and which is not a weak
	P-point because of separability of $\beta\omega$). We will come back to this fact in the next section, armed with additional notions and facts.
\end{remark}

We finish this section with some general results bonding the existence of weak P-points and $\mu$-points.

\begin{lemma} \label{mu_is_not_limit}
A $\mu$-point cannot be a limit point of a countable set of non$-\mu$-points.
\end{lemma}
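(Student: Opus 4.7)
The plan is to prove the contrapositive by averaging witnessing measures. Suppose $p$ is a limit point of a countable set $N = \{q_n : n \in \omega\} \subseteq X \setminus \{p\}$ where each $q_n$ is not a $\mu$-point. For each $n$, fix a non-atomic Radon probability measure $\mu_n$ on $X$ with $q_n \in \mathrm{supp}(\mu_n)$ (such a $\mu_n$ exists precisely because $q_n$ fails to avoid non-atomic measures). Define
\[ \mu = \sum_{n \in \omega} 2^{-n-1} \mu_n. \]
I will show that $\mu$ witnesses that $p$ is not a $\mu$-point, contradicting the hypothesis.

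The measure $\mu$ is a convex combination of probability measures, so $\mu(X)=1$, and $\sigma$-additivity follows from monotone convergence applied to the partial sums. It is non-atomic because $\mu(\{x\}) = \sum_n 2^{-n-1}\mu_n(\{x\}) = 0$ for every $x \in X$. In particular $\mu(\{p\}) = 0$, so it remains to check $p \in \mathrm{supp}(\mu)$. If $U$ is any open neighborhood of $p$, then since $p$ is a limit point of $N$ there is some $n$ with $q_n \in U$; as $q_n \in \mathrm{supp}(\mu_n)$, we have $\mu_n(U) > 0$ and thus $\mu(U) \geq 2^{-n-1}\mu_n(U) > 0$. Hence $p \in \mathrm{supp}(\mu)$.

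The one step that requires a brief justification is that the Radon property is preserved under countable convex combinations, since the paper restricts to Radon measures. This is standard: given a Borel set $B$ and $\varepsilon > 0$, pick $N$ with $\sum_{n > N} 2^{-n-1} < \varepsilon/2$, then use inner regularity of each $\mu_n$ for $n \leq N$ to find compact $K_n \subseteq B$ with $\mu_n(B \setminus K_n) < \varepsilon/2$; the union $K = \bigcup_{n \leq N} K_n$ then witnesses $\mu(B \setminus K) < \varepsilon$. Outer regularity by open sets is analogous. I do not expect any genuine obstacle here — the argument is really just the natural adaptation of the proof of Proposition \ref{P-point}'s dual, where the countable family of neighborhoods is replaced by a countable family of witnessing measures.
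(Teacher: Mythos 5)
Your proof is correct and follows essentially the same route as the paper: fix a witnessing non-atomic measure $\mu_n$ for each $q_n$, form the weighted sum $\sum_n 2^{-n-1}\mu_n$, and observe that every neighbourhood of $p$ meets the support of some $\mu_n$. The only differences are cosmetic (your weights are properly normalized, and you spell out the preservation of the Radon property, which the paper omits).
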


\begin{proof}
Suppose that a set $A=\{a_n\colon n\in \omega\}$ is countable and none of $a_n$ is a $\mu$-point. Assume that $p\in \overline{A}$. We will show that $p$ does not avoid non-atomic measures.

	Let ${\mu}_n$ be a non-atomic measure such that $a_n \in \mathrm{supp}(\mu_n)$. Define \[ \mu= 
	\sum_{n \in \omega}\frac{1}{2^n}{\mu}_n.\] Since $p\in \overline{A}$, for every open $U$ if $p\in U$, then there is $n_0 \in \omega$ such that $a_{n_0} \in U$. Then $\mu(U)\geq 2^{-n_0}{\mu}_{n_0}(U)>0$ and so $\mu(U)>0$ for every neighbourhood
	$U$ of $p$. Hence $p \in \mathrm{supp}(\mu)$. 
\end{proof}

\begin{prop}
	If a compact space $K$ contains a $\mu$-point, then it contains a point which is $\mu$- and weak P- simultaneously (so, a point which avoids all measures), or a countable, dense-in-itself subspace of $\mu$-points.
\end{prop}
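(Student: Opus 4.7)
The plan is to dichotomize on whether the set $M$ of all $\mu$-points of $K$ contains a weak P-point. If it does, the first alternative in the conclusion is delivered directly. Otherwise I will construct, by a straightforward recursion, a countable dense-in-itself subspace $B\subseteq M$ that realises the second alternative.

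The key preliminary observation is a selection step: assuming no $\mu$-point is a weak P-point, every $q\in M$ admits a countable set $A_q\subseteq M\setminus\{q\}$ with $q\in\overline{A_q}$. Indeed, since $q$ is not a weak P-point, there exists a countable $A\subseteq K\setminus\{q\}$ with $q\in\overline{A}$. Partition $A=(A\cap M)\cup(A\setminus M)$. Then $A\setminus M$ is a countable family of non-$\mu$-points, so Lemma \ref{mu_is_not_limit} applied to the $\mu$-point $q$ forbids $q\in\overline{A\setminus M}$. Consequently $q\in\overline{A\cap M}$, and we may set $A_q:=A\cap M$.

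Given these selections, pick any $p\in M$ guaranteed by the hypothesis and iterate: set $B_0=\{p\}$ and $B_{n+1}=B_n\cup\bigcup_{q\in B_n}A_q$. Then $B:=\bigcup_n B_n$ is countable, contained in $M$, and for every $q\in B$ there is some $n$ with $q\in B_n$, so $A_q\subseteq B_{n+1}\subseteq B\setminus\{q\}$ together with $q\in\overline{A_q}$. Hence no point of $B$ is isolated in the subspace topology, i.e.\ $B$ is dense-in-itself, as required.

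I expect the selection step to be the only subtle part of the argument: the naive countable set accumulating to $q$ may mix $\mu$-points with non-$\mu$-points, and Lemma \ref{mu_is_not_limit} is exactly what lets one discard the non-$\mu$-point portion. Once that is in hand, the recursion is essentially bookkeeping and uses nothing particular about $K$ being compact beyond the standing Radon convention for the measures appearing in the lemma.
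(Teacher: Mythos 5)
Your proof is correct and follows essentially the same route as the paper: assume no $\mu$-point is a weak P-point, use Lemma \ref{mu_is_not_limit} to see that each $\mu$-point accumulates to a countable set of $\mu$-points, and close off under this selection to get a countable dense-in-itself set of $\mu$-points. Your explicit partition of the accumulating set into its $\mu$-point and non-$\mu$-point parts just spells out the selection step that the paper leaves implicit.
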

\begin{proof}
	Suppose there is at least one $\mu$-point $p \in X$, and that all $\mu$-points in $X$ are not weak P-points. We inductively construct the subspace with desired property. Let $A_0=\{p\}$ and $p$ is $\mu$-point.  Suppose $A_i=\{a_i^{n}\colon n \in \omega\}$ is constructed and
	consists of $\mu$-points. For every $n$, as $a_i^{n}$ is a $\mu$-point, by the assumption and by Lemma \ref{mu_is_not_limit}, there is a countable set $A_{i+1}^{n}$ of $\mu$-points, such that $a_i^{n} \in \overline{A_{i+1}^{n}}$. Let $A_{i+1}=\bigcup_{n\in
	\omega}A_{i+1}^{n}$. By the construction, $A_i \subseteq \overline{A_{i+1}}$.

	Let $A=\bigcup_{i\in \omega}A_{i}$. Then $A$ is a countable dense-in-itself set of $\mu$-points.
\end{proof}

The last result will use some forcing (but without going into forcing technicalities). Let us first make some general comments. Consider a model $M$ and its forcing extension $M[G]$. Let $\mathbb{A}$ be a Boolean algebra, an element of the model $M$. Then
$\mathbb{A}$ is also an element of $M[G]$. If $x$ is an element of the Stone space of $\mathbb{A}$, then it is still an
element of the Stone space of $\mathbb{A}$ in $M[G]$. Typically the Stone space of $\mathbb{A}$ computed in $M[G]$ will contain more elements and, in general, the Stone space of $\mathbb{A}$ in $M[G]$ may have different topological
properties than the Stone space of $\mathbb{A}$ in $M$. 

\begin{prop} Suppose $\mathbb{A}$ is a Boolean algebra and $K$ is its Stone space. If $x\in K$ does not avoid measures, then there is $\kappa$ such that $\mathbb{B}_\kappa$ forces that $x$ is not a weak P-point in $St(\mathbb{A})$.
\end{prop}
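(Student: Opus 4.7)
The plan is to use a random-real (measure-algebra) forcing to add countably many $\mu$-random points in $\mathrm{St}(\mathbb{A})^{M[G]}$ that accumulate at $x$ in the Stone topology. The measure $\mu$ witnessing non-avoidance supplies both the source of randomness and the guarantee that every clopen neighborhood of $x$ carries positive mass.

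Fix a Radon measure $\mu$ on $K=\mathrm{St}(\mathbb{A})$ with $\mu(\{x\})=0$ and $x\in\mathrm{supp}(\mu)$. If $x$ lies in the support of the atomic part of $\mu$, then $x$ is already the limit of a countable set of $\mu$-atoms, so $x$ is not a weak P-point already in the ground model and trivial forcing suffices; otherwise replace $\mu$ by its continuous part and assume $\mu$ is non-atomic. Let $\mathbb{C}=\mathrm{Bor}(K)/\mathcal{N}_\mu$ be the measure algebra of $\mu$ and let $\kappa\ge\max(\omega,\mathrm{weight}(\mathbb{C}))$. By Maharam's theorem the countable tensor power $\mathbb{C}^{\otimes\omega}$ (the measure algebra of $\mu^\omega$ on $K^\omega$) is a non-atomic probability measure algebra of weight at most $\kappa$, hence embeds as a complete measure-preserving subalgebra of $\mathbb{B}_\kappa$. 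A $\mathbb{B}_\kappa$-generic $G$ therefore induces, by restriction, a generic for $\mathbb{C}^{\otimes\omega}$, corresponding to a sequence of $\omega$ independent $\mu$-random points $y_0,y_1,\ldots\in\mathrm{St}(\mathbb{A})^{M[G]}$: each $y_n$, viewed as an ultrafilter on $\mathbb{A}$, satisfies $\mathbb{P}(y_n\in B)=\mu(B)$ for every ground-model Borel $B\subseteq K$, independently over $n$.

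Two observations finish the job. First, $\{x\}$ is a $\mu$-null closed set, so each $y_n$ avoids $\{x\}$ and hence $y_n\neq x$ in $M[G]$. Second, for every $a\in x$ we have $\mu([a])>0$ because $x\in\mathrm{supp}(\mu)$, so by independence
\[ \mathbb{P}\bigl(\forall n,\ y_n\notin[a]\bigr)=\prod_{n\in\omega}(1-\mu([a]))=0, \]
meaning the statement ``$\exists n,\ y_n\in[a]$'' is forced outright by $\mathbf{1}_{\mathbb{B}_\kappa}$. Because the Boolean algebra $\mathbb{A}$ and the subset $x\subseteq\mathbb{A}$ are absolute between $M$ and $M[G]$, we conclude in $M[G]$ that every basic clopen neighborhood of $x$ contains some $y_n$, i.e.\ $x\in\overline{\{y_n:n\in\omega\}}$. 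Hence $\{y_n:n\in\omega\}$ is a countable subset of $\mathrm{St}(\mathbb{A})^{M[G]}\setminus\{x\}$ with $x$ in its closure, so $x$ is not a weak P-point in $\mathrm{St}(\mathbb{A})^{M[G]}$.

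The main technical step to verify is the absorption: that $\mathbb{C}^{\otimes\omega}$ really embeds as a complete subalgebra of $\mathbb{B}_\kappa$ for $\kappa\ge\mathrm{weight}(\mathbb{C})$, so that the $\mathbb{B}_\kappa$-generic decomposes into countably many independent $\mu$-random reals. This is a standard Maharam-theorem calculation; once it is in hand, the remainder is a clean Borel--Cantelli argument over the ground-model data $(\mathbb{A},x,\mu)$.
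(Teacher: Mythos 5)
Your proof is correct, but it takes a genuinely different route from the paper's. The paper argues more abstractly: it passes to the support $F=\mathrm{supp}(\mu)$, writes $F=St(\mathbb{A}')$ for a quotient $\mathbb{A}'$ of $\mathbb{A}$ that carries a strictly positive measure, and then cites Kamburelis's theorem that some $\mathbb{B}_\kappa$ forces $St(\mathbb{A}')$ to become separable; since $x\in F$ is not isolated there (as $\mu(\{x\})=0$), Proposition \ref{sep} finishes the job. You instead unwind what that citation hides: you absorb $\mathbb{C}^{\otimes\omega}$ into $\mathbb{B}_\kappa$ via Maharam's theorem and add countably many independent $\mu$-random ultrafilters $y_n$ on $\mathbb{A}$, then run a Borel--Cantelli computation to see that they accumulate at $x$. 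This is essentially a self-contained proof of the instance of Kamburelis's result that is needed (your set $\{y_n\}$ is in fact almost surely dense in the support), at the cost of carrying the measure-algebra machinery explicitly; the paper's version is shorter but opaque without the reference. Two small points to tighten: the assertion that $\mathbb{P}(y_n\in B)=\mu(B)$ for \emph{every} ground-model Borel $B$ is delicate when $K$ is non-metrizable (reinterpreting Borel codes is not canonical there), but you only ever use it for clopen sets $[a]$, where $\llbracket a\in y_n\rrbracket=[a]_{\mathcal{N}_\mu}$ is exact, and for $\{x\}$, where the Radon regularity identity $\mu(\{x\})=\inf_{a\in x}\mu([a])$ gives $\llbracket y_n=x\rrbracket=0$ directly --- so state it only in that form. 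Also, in the purely atomic case ``trivial forcing'' is not literally of the form $\mathbb{B}_\kappa$; say instead that the ground-model countable witness $C$ with $x\in\overline{C}$ survives into any $\mathbb{B}_\kappa$-extension, since ``every $a\in x$ satisfies $[a]\cap C\neq\emptyset$'' is absolute.
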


\begin{proof} Let $\mathbb{A}$, $K$ and $x\in K$ be as above. Let $\mu$ be the measure on $K$ such that $x\in F = \mathrm{supp}(\mu)$. By the Stone duality, there is a Boolean algebra $\mathbb{A}'$ such that $St(\mathbb{A}') = F$ and there is a
	Boolean homomorphism $\varphi\colon \mathbb{A} \to \mathbb{A}'$ which is onto. Also, as $x\in K$, $x$ extends (as an ultrafilter) the filter dual to the kernel of $\varphi$. Since $\mathbb{A}'$ supports a measure, by \cite{Kamburelis}, there is $\kappa$ such
	that $\mathbb{B}_\kappa$ forces that the Stone space $K'$ of $\mathbb{A}'$ (computed in the extension) is separable. The mapping $\varphi$ belongs to the extension, so in the extension $x\in K'$. Hence, it is not a weak P-point.
\end{proof}

\section{$\mu$-points versus weak P-points in $\omega^\ast$}\label{omegaast}

In this section we consider the relationship between weak P-points and $\mu$-points in $\omega^\ast$. We begin with a rather easy result: there are points in $\omega^\ast$ which are neither weak P-points nor $\mu$-points. Then we will show how to
use Kunen's $\mathfrak{c}$-OK-points to avoid all measures (so to produce points which are both weak P-points and $\mu$-points). Then we consider the following natural question: if a point $x\in \omega^\ast$ avoids non-atomic measures, should it avoid also purely-atomic ones (and so should it be a weak $P$-point) and vice versa?

\begin{prop} There is an ultrafilter which is neither $\mu$-point nor a weak P-point.
\end{prop}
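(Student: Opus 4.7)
The plan is to combine a non-atomic measure on $\omega^\ast$ with the fact that the support of such a measure is too big to consist entirely of weak P-points. Recall from the introduction that a non-atomic Radon measure $\nu$ on $\omega^\ast$ exists (for example, extend the asymptotic density along any free ultrafilter, or use the fact that $\omega^\ast$ is not scattered together with Rudin's theorem). Let $F = \mathrm{supp}(\nu)$.

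Since $\nu$ is non-atomic we have $\nu(\{x\}) = 0$ for every $x \in \omega^\ast$. In particular, every point of $F$ is witnessed by $\nu$ to fail the $\mu$-point property. So it suffices to locate a point in $F$ that is also a limit of a countable subset of $\omega^\ast \setminus \{p\}$.

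For this, observe that $F$ is infinite; indeed, $\nu$ being non-atomic and $\nu(F) = 1$ means $F$ cannot consist of finitely many points (each would have measure zero). Any infinite compact Hausdorff space contains a countable discrete-in-itself subspace, so pick pairwise distinct $q_n \in F$ ($n\in\omega$) with each $q_n$ isolated in $D := \{q_n : n\in\omega\}$. If $D$ were closed in $\omega^\ast$, it would be a closed discrete subspace of the compact space $\omega^\ast$ and hence finite, a contradiction. Therefore there exists $p \in \overline{D} \setminus D$.

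This $p$ is the point we want. On one hand, $p$ is a limit point of the countable set $D \subseteq \omega^\ast \setminus \{p\}$, so $p$ is not a weak P-point. On the other hand, $F$ is closed, so $p \in \overline{D} \subseteq F = \mathrm{supp}(\nu)$, while $\nu(\{p\}) = 0$; thus $p$ does not avoid the non-atomic measure $\nu$ and so is not a $\mu$-point. There is no real obstacle here — the only care needed is the standard compactness argument that an infinite discrete subset of a compact Hausdorff space cannot be closed, which yields the limit point inside the closed set $F$ for free.
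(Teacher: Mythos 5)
Your argument is correct and is essentially the paper's own proof: take a non-atomic measure on $\omega^\ast$, pick a countable discrete subset $D$ of its support, and take a point of $\overline{D}\setminus D$, which fails to be a weak P-point (witnessed by $D$) and lies in the support of the non-atomic measure. The extra details you supply (the support is infinite, an infinite discrete subset of a compact Hausdorff space is not closed) are exactly the routine facts the paper leaves implicit.
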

\begin{proof} By \cite{Pelcz_Semad} there is a non-atomic measure $\mu$ on $\omega^\ast$. Let $C$ be a countable discrete subset of the support of $\mu$ and let $x\in \overline{C}\setminus C$. Then $x$ is not a weak P-point (which is witnessed by
	$C$) and $x$ does not avoid the measure $\mu$.
\end{proof}

The easiest example of a point of $\omega^\ast$ which avoids all measures (and so it is both a $\mu$-point and a weak P-point) is a P-point (see Proposition \ref{P-point}). Consistently, there are no P-points but nevertheless the points avoiding
measures do exist in ZFC.
	
\begin{theorem}\label{Avoid-measures} There is a point which is a weak P-point and $\mu$-point simultaneously (and so it avoids all measures).
\end{theorem}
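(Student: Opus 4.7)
The plan is to prove that every $\mathfrak{c}$-OK-point $p\in\omega^\ast$ (whose existence in ZFC is due to Kunen) is automatically a $\mu$-point. Since Kunen's construction already guarantees $p$ is a weak P-point, and weak P-points avoid purely atomic measures, establishing the $\mu$-point property will give a point avoiding all measures on $\omega^\ast$.

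Fix a $\mathfrak{c}$-OK-point $p$ and suppose toward contradiction that there is a non-atomic measure $\mu$ with $p\in\mathrm{supp}(\mu)$. Since $\mu(\{p\})=0$ by non-atomicity, outer regularity combined with zero-dimensionality of $\omega^\ast$ lets me choose, for each $n\in\omega$, a clopen $[A_n]\ni p$ with $\mu([A_n])<\varepsilon_n$, where $\varepsilon_n:=2^{-n^2}$. Applying the $\mathfrak{c}$-OK property to the sequence $(A_n)$ yields clopen neighbourhoods $[B_\alpha]\ni p$ for $\alpha<\mathfrak{c}$ such that $\bigcap_{i=1}^{n}[B_{\alpha_i}]\subseteq [A_n]$ whenever $\alpha_1,\dots,\alpha_n$ are distinct; in particular $\mu\bigl(\bigcap_{i=1}^{n}[B_{\alpha_i}]\bigr)<\varepsilon_n$ for any such choice.

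The heart of the argument, and what I expect to be the main obstacle, is to show that only countably many $\alpha$ can have $\mu([B_\alpha])>0$. Assume otherwise: fix $\varepsilon>0$ and an uncountable $I\subseteq\mathfrak{c}$ with $\mu([B_\alpha])\ge\varepsilon$ for $\alpha\in I$. Take a finite $F\subseteq I$ of cardinality $k$ and set $h:=\sum_{\alpha\in F}\mathbf{1}_{[B_\alpha]}$. Jensen's inequality applied to the convex map $t\mapsto t^n$ on the probability space $(\omega^\ast,\mu)$ yields $\int h^n\,d\mu\ge\bigl(\int h\,d\mu\bigr)^n\ge (k\varepsilon)^n$. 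For the upper bound I expand $h^n$ and group the $k^n$ ordered tuples $(\alpha_1,\dots,\alpha_n)\in F^n$ by the set $S$ of distinct entries; since the indicators are idempotent, each tuple whose set of distinct entries is $S$ contributes $\mu\bigl(\bigcap_{\alpha\in S}[B_\alpha]\bigr)\le\varepsilon_{|S|}$ by the $\mathfrak{c}$-OK bound. Collecting terms with $|S|=j$, the dominant contribution comes from $j=n$, where the number of length-$n$ tuples with all entries distinct is $k(k-1)\cdots(k-n+1)\le k^n$; a routine estimate of the remaining terms (whose coefficients $j!\,S(n,j)\binom{k}{j}$ decay by a factor $\sim k/(j+1)\cdot 2^{-2j-1}$ relative to the $j+1$-th) shows that for $k$ sufficiently large compared to $n$,
\[ \int h^n\,d\mu \;\le\; 2k^n\varepsilon_n \;=\; 2k^n\cdot 2^{-n^2}. \]
Combined with the Jensen lower bound, $\varepsilon^n\le 2\cdot 2^{-n^2}$, so $\varepsilon\le 2\cdot 2^{-n}$. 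Since $n$ was arbitrary, $\varepsilon=0$, a contradiction.

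Consequently $\{\alpha:\mu([B_\alpha])>0\}=\bigcup_{m}\{\alpha:\mu([B_\alpha])\ge 1/m\}$ is a countable union of countable sets, hence countable, so there exists $\alpha$ with $p\in[B_\alpha]$ and $\mu([B_\alpha])=0$, contradicting $p\in\mathrm{supp}(\mu)$. Therefore $p$ is a $\mu$-point; together with the weak P-point property furnished by Kunen, $p$ avoids every measure on $\omega^\ast$.
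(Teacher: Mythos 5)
Your argument is correct, and it reaches the theorem through the same vehicle as the paper---Kunen's ZFC construction of $\mathfrak{c}$-OK-points---but the engine is genuinely different. The paper deduces the $\mu$-point property from Kunen's general theorem that a non-isolated point of a $T_3$ ccc space is never $\omega_1$-OK: the support of a measure is ccc, $\omega_1$-OK-ness relativizes to subspaces containing the point, and the combinatorial core is a counting lemma extracting an uncountable antichain from an uncountable family of open sets whose $n$-fold intersections are empty. You instead run a direct moment estimate on the refinement system itself: with $\mu([A_n])<2^{-n^2}$, Jensen's lower bound $(k\varepsilon)^n\le\int h^n\,d\mu$ against the upper bound obtained by sorting tuples according to their set of distinct entries forces $\varepsilon^n\le 2\cdot 2^{-n^2}$ for every $n$, so for each $\varepsilon>0$ only finitely many $V_\alpha$ can have measure at least $\varepsilon$; your description of the coefficient decay is a bit loose, but the crude bound that the terms with $j<n$ contribute at most $n^{n+1}k^{n-1}$ already gives $\int h^n\,d\mu\le 2k^n2^{-n^2}$ once $k\ge n^{n+1}2^{n^2}$, which is all you need. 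Hence at most countably many members of the refinement system have positive measure, and any of the remaining $\mathfrak{c}$-many yields a null neighbourhood of $p$. What your route buys is a self-contained, quantitative argument that never mentions chain conditions and actually proves something stronger (every refinement system for such a sequence is almost entirely $\mu$-null); what the paper's route buys is that the single ccc theorem simultaneously delivers the weak P-point property, which you must import separately from Kunen, and avoids all measure theory. Two small points worth making explicit in a write-up: the $V_\alpha$ furnished by the definition are merely open, so either shrink each to a clopen $[B_\alpha]\ni p$ (this preserves the intersection condition) or work with indicators of open sets directly; and the final step from ``$p$ is a weak P-point and a $\mu$-point'' to ``$p$ avoids all measures'' uses the decomposition of an arbitrary measure into its non-atomic and purely atomic parts and the intersection of the two resulting null neighbourhoods.
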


To prove the above we will use so called $\mathfrak{c}$-OK point introduced by Kunen in \cite{Kunen_weak_p}. Kunen proved that $\mathfrak{c}$-OK-points cannot belong to a closure of any ccc set (not containing the point itself). The above theorem is an immediate
consequence of this fact. We will enclose Kunen's proof for the readers convenience. 

\begin{definition}
	A set $S\subseteq X$ is a \emph{$\kappa$-OK set} if for every countable collection $\{U_n\colon n\in \omega\}$ of neighbourhoods of $S$ there is a family of $\kappa$ many neighbourhoods $\{V_{\alpha}\colon \alpha<\kappa\}$ (called a refinement system of
	$\{U_n\colon n\in \omega\}$) such that for every finite $n$ and $\alpha_{0}\dots \alpha_{n-1}$ we have $V_{\alpha_0}\cap \dots \cap V_{\alpha_{n-1}}\subseteq U_n$. We say that a point $p$ is a \emph{$\kappa$-OK point} if $\{p\}$ is a $\kappa$-OK
	set.
\end{definition}

\begin{theorem}[Kunen]
	There are $\mathfrak{c}$-OK-points in $\omega^\ast$.
\end{theorem}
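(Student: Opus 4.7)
The plan is to translate the problem into a combinatorial statement about ultrafilters on $\omega$, then carry out a transfinite recursion of length $\mathfrak{c}$ guided by an independent matrix. A point $p \in \omega^\ast$ is a $\mathfrak{c}$-OK point precisely when, for every decreasing sequence $A_0 \supseteq A_1 \supseteq \cdots$ in $p$, one can find $\{B_\alpha : \alpha < \mathfrak{c}\} \subseteq p$ such that $B_{\alpha_0} \cap \cdots \cap B_{\alpha_{n-1}} \subseteq^{\ast} A_n$ for all $\alpha_0 < \cdots < \alpha_{n-1}$; this follows because basic neighbourhoods of $p$ are of the form $[A]$ for $A \in p$, with $[A] \subseteq [B]$ equivalent to $A \subseteq^{\ast} B$.

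First I would fix in ZFC an \emph{independent matrix} $\{M_{\xi,n} : \xi < \mathfrak{c},\ n < \omega\}$: for each $\xi$ the family $\{M_{\xi,n}\}_{n}$ partitions $\omega$ into infinite pieces, and for every finite $F \subseteq \mathfrak{c}$ and every $\varphi \colon F \to \omega$, the intersection $\bigcap_{\xi \in F} M_{\xi,\varphi(\xi)}$ is infinite. Such a matrix is built by a standard Hausdorff-style argument. I would then partition the column indices $\mathfrak{c}$ into $\mathfrak{c}$ pairwise disjoint blocks $\{\mathcal{B}_\eta\}_{\eta < \mathfrak{c}}$, each of size $\mathfrak{c}$, to serve as a private pool of columns for each stage.

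Next, enumerate $\langle (\mathcal{A}_\eta, D_\eta) : \eta < \mathfrak{c}\rangle$ all pairs with $\mathcal{A}_\eta$ a decreasing countable family in $\mathcal{P}(\omega)$ and $D_\eta \subseteq \omega$, in such a way that every candidate $\mathcal{A}$ appears cofinally often. I would recursively build an increasing chain of filters $\mathcal{F}_\eta$ on $\omega$, maintaining as invariant that $\mathcal{F}_\eta$ is compatible with any finite selection of matrix cells from blocks $\mathcal{B}_{\eta'}$ with $\eta' \geq \eta$. At stage $\eta$, if $\mathcal{A}_\eta = \{A_n\}_n \subseteq \mathcal{F}_\eta$, I would use columns $\{\xi_\alpha : \alpha < \mathfrak{c}\}$ from $\mathcal{B}_\eta$ to define $\mathfrak{c}$ refinement sets $B_\alpha$ satisfying the $\mathfrak{c}$-OK condition, and adjoin them to the filter. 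Then, to drive $\mathcal{F}$ toward an ultrafilter, I would add $D_\eta$ or $\omega \setminus D_\eta$, whichever preserves the invariant.

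The main obstacle is the construction of the $B_\alpha$'s at each stage. The naive formula $B_\alpha = \bigcup_n (M_{\xi_\alpha, n} \cap A_n)$ fails: an element $x$ in a $k$-fold intersection $B_{\alpha_0} \cap \cdots \cap B_{\alpha_{k-1}}$ lies in $A_{\max_i n_i}$ where $x \in M_{\xi_{\alpha_i}, n_i}$, but independence of the matrix allows infinitely many $x$ for which all $n_i < k$, so the intersection can escape $A_k$. The correct definition — making $\{B_\alpha\}$ behave as a $\mathfrak{c}$-OK refinement while still keeping $\mathcal{F}_{\eta+1}$ compatible with the columns in later blocks — must exploit both the decreasing structure of $\{A_n\}$ and the full strength of matrix independence; this combinatorial refinement is the heart of Kunen's argument. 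Once it is in place, the verification that $p := \bigcup_\eta \mathcal{F}_\eta$ is an ultrafilter with the $\mathfrak{c}$-OK property is routine from the enumeration.
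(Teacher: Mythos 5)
Your proposal sets up the correct framework for Kunen's theorem (which the paper itself only cites, without proof): the translation of the $\mathfrak{c}$-OK property into the statement that every decreasing sequence $A_0\supseteq A_1\supseteq\cdots$ in the ultrafilter admits $\{B_\alpha:\alpha<\mathfrak{c}\}$ in the ultrafilter with every $n$-fold intersection $\subseteq^\ast A_n$, the length-$\mathfrak{c}$ recursion with a reserved independent matrix, and the alternation between refinement stages and stages deciding $D_\eta$. But the argument stops exactly at the step that constitutes the theorem: you correctly diagnose why $B_\alpha=\bigcup_n(M_{\xi_\alpha,n}\cap A_n)$ fails and then defer the correct definition to ``the heart of Kunen's argument'' without supplying it. As written this is an outline with the central lemma missing, so it is not a proof.

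The missing idea is to replace the plain independent matrix by an independent \emph{linked} system and to work with the pairwise disjoint differences $A_n\setminus A_{n+1}$ rather than with the $A_n$ themselves. One wants, for each $n$, a family $\{C_{\alpha,n}:\alpha<\mathfrak{c}\}$ such that any $n+1$ distinct members have empty intersection while any $n$ distinct members remain compatible with the filter being built; the prototype is $C_{\alpha,n}=\{s\in [I]^n:\iota(\alpha)\in s\}$ after identifying a suitable piece of $\omega$ with $[I]^n$ for an index set $I$ of size $\mathfrak{c}$ (an $n$-element set cannot contain $n+1$ distinct points --- the same $n$-linkedness device that appears in the families $\mathcal{F}_n$ in the proof of Theorem \ref{Janek}). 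Setting $B_\alpha=\bigcup_n\bigl((A_n\setminus A_{n+1})\cap C_{\alpha,n}\bigr)$, any $x\in B_{\alpha_0}\cap\dots\cap B_{\alpha_{n-1}}$ lies in a single difference $A_m\setminus A_{m+1}$, hence in $C_{\alpha_0,m}\cap\dots\cap C_{\alpha_{n-1},m}$, which is empty for $m<n$; therefore $m\geq n$ and $x\in A_n$. The remaining work --- which your sketch also waves through as ``routine'' --- is to prove in ZFC that such linked families can be chosen simultaneously independent over the whole recursion, so that adjoining the $B_\alpha$'s at stage $\eta$ and deciding $D_\eta$ both preserve the finite intersection property with all unused columns. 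That existence statement (Kunen's independent linked families, cf.\ also \cite{Simon_ind}) is the genuinely nontrivial content of the theorem and cannot be omitted.
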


The crucial fact connecting $\kappa$-OK-points with the points avoiding measures is the following:

\begin{theorem}[Kunen]
Suppose $X$ is a $T_3$ ccc space. Let $p \in X$ be a non isolated point. Then $p$ is not $\omega_{1}$-OK.
\end{theorem}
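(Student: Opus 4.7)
Assume for contradiction that $p \in X$ is both non-isolated and $\omega_1$-OK. The plan is to produce an uncountable pairwise disjoint family of nonempty open sets in $X$, contradicting ccc.

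First, using $T_3$ and non-isolation of $p$, I would recursively build a decreasing sequence of open neighborhoods $U_0 \supseteq U_1 \supseteq U_2 \supseteq \dots$ of $p$ with $\overline{U_{n+1}} \subseteq U_n$: at stage $n$, non-isolation supplies some $q_n \in U_n \setminus \{p\}$, and regularity lets me separate $p$ from $q_n$ inside $U_n$. Applying the $\omega_1$-OK property of $p$ to this sequence yields $\{V_\alpha : \alpha < \omega_1\}$, open neighborhoods of $p$ with $V_{\alpha_0} \cap \dots \cap V_{\alpha_{n-1}} \subseteq U_n$ for every $n$-tuple of indices.

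The key extraction step exploits the inclusion $V_\alpha \cap V_\beta \subseteq U_2$ for $\alpha \neq \beta$. Choose a maximal pairwise disjoint family $\{W_i : i \in \omega\}$ of nonempty open subsets of the annulus $U_1 \setminus \overline{U_2}$; by ccc this family is countable, and by maximality its union is dense in the annulus. For each fixed $i$, the family $\{V_\alpha \cap W_i\}_{\alpha < \omega_1}$ is pairwise disjoint, because $(V_\alpha \cap W_i) \cap (V_\beta \cap W_i) \subseteq U_2 \cap W_i = \emptyset$; hence by ccc at most countably many of these intersections are nonempty. If some $W_i$ is met by uncountably many $V_\alpha$, the sets $V_\alpha \cap W_i$ immediately furnish the desired ccc-violating family.

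The main obstacle is the residual case in which, for every $i$, only countably many $V_\alpha$ meet $W_i$: summing over $i$, uncountably many $V_\alpha$ miss $\bigcup_i W_i$, and by density of this union in the annulus together with openness of $V_\alpha$, this forces $V_\alpha \subseteq \overline{U_2}$ for those $\alpha$. I would then iterate the argument inside the open subspace $\mathrm{int}(\overline{U_2})$, which inherits $T_3$, ccc, non-isolation of $p$, and the $\omega_1$-OK property. The hard part is making the iteration terminate in a genuine contradiction — either by forcing finitely many steps via a refined choice of the $U_n$ that uses non-isolation at deeper scales in $X$, or by assembling the partial disjoint families accumulated across successive iterations into one uncountable pairwise disjoint family in $X$.
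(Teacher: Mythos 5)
There is a genuine gap, and it is exactly where you flag it: the residual case. The missing idea is the first step of Kunen's argument, which you skip entirely: in a $T_3$ ccc space a non-isolated point $p$ cannot be a P-point (otherwise one builds a strictly decreasing $\omega_1$-chain of neighbourhoods $U_\alpha$ with $\overline{U_\beta}\subsetneq U_\alpha$ for $\alpha<\beta$, and the differences $U_\alpha\setminus\overline{U_{\alpha+1}}$ form an uncountable cellular family). This lets one choose the sequence $(U_n)$ \emph{not} arbitrarily but so that $p\notin\mathrm{Int}\bigl(\bigcap_n\overline{U_n}\bigr)$. That choice is what kills your residual case at the outset: it forces \emph{every} neighbourhood of $p$, in particular every $V_\alpha$ from the refinement system, to satisfy $V_\alpha\setminus\overline{U_n}\neq\emptyset$ for some $n$, and a pigeonhole then fixes one $n$ working for uncountably many $\alpha$. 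With your arbitrary decreasing sequence $\overline{U_{n+1}}\subseteq U_n$ there is nothing to prevent all the $V_\alpha$ from sitting inside $\bigcap_n\overline{U_n}$, so the residual case is not a technicality but the generic situation, and your proposed iteration into $\mathrm{int}(\overline{U_2})$ has no termination mechanism --- after $\omega$ rounds you just have another decreasing sequence of neighbourhoods and no contradiction. Your honest admission that you cannot close this is correct: it cannot be closed without importing the non-P-point input.

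A second, smaller gap: your extraction step only uses the pairwise condition $V_\alpha\cap V_\beta\subseteq U_2$. Once the relevant level is some $n\geq 3$ (which is what the pigeonhole above produces), the sets $W_\alpha=V_\alpha\setminus\overline{U_n}$ are nonempty open sets whose $n$-fold intersections are empty, but they need not be pairwise disjoint. The paper needs a separate combinatorial lemma (Lemma \ref{Lemma}) to pass from ``$\kappa$ open sets, any $n$ of which have empty intersection'' to ``$\kappa$ pairwise disjoint open sets'' before ccc can be invoked. Your argument as written only covers the $n=2$ instance of the OK property, which is far weaker than what the hypothesis provides and not enough to run the proof.
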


\begin{proof}
	First, notice that if $p$ is as above, then $p$ is not a P-point. Otherwise we could find a family $\{ U_\alpha\colon  \alpha<\omega_1 \}$ of neighbourhoods of $p$, such that $cl(U_\beta)$ is a proper subset of $U_\alpha$ whenever
	$\alpha<\beta$. But then $\{U_\alpha \setminus \overline{U_{\alpha+1}}: \alpha< \omega_1\}$ would contradict the ccc property.

	Now, let $\{ U_n: \ n \in \omega \}$ be a family of open neighborhoods of $p$ such that \[ p \not \in \mathrm{Int}\left(\bigcap \limits_{n}\overline{U_n}\right).\] Let $\{ V_\alpha: \alpha< \omega_1 \}$ be a refinement system for $\{ U_n: n \in
	\omega \}$. For each $\alpha<\omega_1$, there is $n\in \omega$ such that $V_\alpha \setminus \overline{U_n}\neq \emptyset$, so fix $n \in \omega$ so that $B= \{\alpha<\omega_1: \ V_\alpha \setminus \overline{U_n}\neq \emptyset\}$ is uncountable.
	For $\alpha \in B$, let $W_\alpha= V_\alpha \setminus \overline{U_n}$. Then $W_\alpha$ are non-empty, but any $n$ of them have empty intersection. By the Lemma \ref{Lemma}, it contradicts ccc.
\end{proof}

\begin{lemma}\label{Lemma}
	Suppose $\cc{A}$ is a family of open subsets of a space $X$, $|\mathcal{A}| = \kappa$  and the intersection of any $n$ sets from the family is empty. Then there are $\kappa$-many pairwise disjoint open subsets of $X$.
\end{lemma}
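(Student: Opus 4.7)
The plan is to proceed by induction on $n$. The base case $n=2$ is immediate: the hypothesis forces any two distinct members of $\cc{A}$ to have empty intersection, so $\cc{A}$ itself is a family of $\kappa$-many pairwise disjoint open subsets of $X$.

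For the inductive step, suppose the claim holds for $n-1$ and let $\cc{A}=\{A_\alpha\colon \alpha<\kappa\}$ satisfy the $n$-fold emptiness hypothesis. We may assume $\kappa$ is regular and uncountable, which covers the intended application ($\kappa=\omega_1$). I would attempt to extract $\kappa$-many pairwise disjoint elements of $\cc{A}$ by greedy transfinite recursion: at stage $\gamma<\kappa$ pick $A_{\alpha_\gamma}\in\cc{A}$ disjoint from $\bigcup_{\beta<\gamma}A_{\alpha_\beta}$ if such an element exists, and halt otherwise.

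If the recursion runs for the full $\kappa$ steps we are finished. Otherwise it halts at some $\gamma<\kappa$, meaning that every $A\in\cc{A}$ meets some $A_{\alpha_\beta}$ with $\beta<\gamma$. Since $\kappa$ is regular and $\gamma<\kappa$, pigeonhole produces $\beta_0<\gamma$ such that
\[ \cc{A}^\ast = \{A\in\cc{A}\colon A\cap A_{\alpha_{\beta_0}}\neq\emptyset\} \]
has cardinality $\kappa$. Form the trace family $\cc{A}' = \{A\cap A_{\alpha_{\beta_0}}\colon A\in\cc{A}^\ast\setminus\{A_{\alpha_{\beta_0}}\}\}$. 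These are nonempty open subsets of $X$ (since $A_{\alpha_{\beta_0}}$ is open in $X$), and the map $A\mapsto A\cap A_{\alpha_{\beta_0}}$ is at most $(n-1)$-to-one -- any $n$ members of $\cc{A}^\ast$ with a common nonempty trace would force a nonempty $n$-fold intersection -- so $|\cc{A}'|=\kappa$. The intersection of any $n-1$ distinct elements of $\cc{A}'$ expands to $A_{\alpha_{\beta_0}}\cap A^{(1)}\cap\cdots\cap A^{(n-1)}$, an intersection of $n$ distinct members of $\cc{A}$, hence empty. The inductive hypothesis applied to $\cc{A}'$ supplies the desired $\kappa$-many pairwise disjoint open subsets of $X$.

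The main obstacle is dealing with the stalled greedy construction: once it halts, one must use regularity of $\kappa$ to locate a single previously-chosen set absorbing $\kappa$-many elements of $\cc{A}$, and then verify that passing to traces inside that set genuinely drops the intersection parameter from $n$ to $n-1$. Everything else is bookkeeping, and since $n$ is a fixed positive integer the induction closes after finitely many rounds.
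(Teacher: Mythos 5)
Your argument is correct, but it is genuinely different from the paper's. The paper gives a one-shot counting argument: it looks at the families $\cc{A}_l$ of nonempty $l$-fold intersections, uses the hypothesis to note $\cc{A}_n=\emptyset$, finds the level $m<n$ where the cardinality first drops from $\kappa$ to some $\xi<\kappa$, and then shows that the $m$-fold intersections involving an index that participates in no nonempty $(m+1)$-fold intersection form a pairwise disjoint family of size $\kappa$. You instead induct on $n$, greedily extract a disjoint subfamily by transfinite recursion, and when the recursion stalls you use a pigeonhole to localize $\kappa$-many sets onto a single $A_{\alpha_{\beta_0}}$ and pass to traces, correctly verifying that the trace map is at most $(n-1)$-to-one and that the intersection parameter drops to $n-1$. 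The trade-off: the paper's argument works uniformly for every infinite $\kappa$ (the cardinal arithmetic $\xi\cdot(m+1)<\kappa$ and $\xi^m<\kappa$ needs nothing beyond $\kappa$ infinite), whereas your pigeonhole step genuinely requires $\kappa$ regular, so as written you prove a slightly weaker statement than the one displayed. This is harmless here, since the lemma is applied only with $\kappa=\omega_1$, and the singular case could in any event be recovered by an Erd\H{o}s--Tarski-type argument (or by reverting to the paper's counting); but you should either state the lemma with the regularity hypothesis or add that reduction. Two cosmetic points you share with the paper: one should read ``$n$ sets'' as ``$n$ distinct sets'' and discard empty members of $\cc{A}$, as otherwise the conclusion is vacuous.
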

\begin{proof}
Let $\cc{A}=\{A_\alpha\colon \ \alpha<\kappa\}$ and define \[ \cc{A}_l=\{A_0 \cap \dots \cap A_{l-1}\neq \emptyset \colon \ A_j \in \cc{A}, \ j<l\}.\] 
	Find $m<n$ such that $|\cc{A}_m|=\kappa$, and $|\cc{A}_{m+1}|=\xi< \kappa$. Let \[ J=\{\alpha<\kappa:  \ \exists \alpha_1 \dots \alpha_{m} \ (A_{\alpha} \cap A_{\alpha_1} \cap \dots \cap A_{\alpha_m})\in \cc{A}_{m+1} \} \]  and let $I= \kappa
	\setminus J$. Then $|J|\leq |\xi \cdot (m+1)|=\xi$. Let \[ \cc{C}=\{U\in\cc{A}_m: \ \exists \beta \in I \ \exists \alpha_1, \dots \alpha_{m-1} \ U=(A_\beta \cap A_{\alpha_1}\cap \dots \cap
	A_{\alpha_{m-1}})\}.\]  Notice that since $|J^{m}|=\xi$, we have $|\cc{C}|=\kappa$. As $J \cap I= \emptyset$, $\cc{C}$ is a family of pairwise disjoint open sets.
\end{proof}

So far, we have the following picture in $\omega^\ast$.

\begin{center}
	\begin{tikzcd}[arrows=Rightarrow]
		\mbox{avoiding atomless measures}  &   &  \mbox{avoiding p. atomic measures } \\[-25pt]
			\mbox{(}\mu\mbox{-point)}  &  &\mbox{(weak P-point)} \\
		&\arrow[lu]		\mbox{avoiding measures} \arrow[ru]  &  \\
		& \arrow[u] \mathfrak{c}\mbox{-OK-point}\\
&\arrow[u]		\mbox{ P-point }  &  \\
\end{tikzcd}
\end{center}
\label{diagram}

We will now study the relation between weak P-points and $\mu$-points.

\begin{theorem} There is a weak $P$-point which is not $\mu$-point.
\end{theorem}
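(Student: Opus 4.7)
The plan is to produce a non-atomic measure $\mu$ on $\omega^\ast$ together with a weak P-point $p$ of $\omega^\ast$ lying in $\mathrm{supp}(\mu)$; non-atomicity then gives $\mu(\{p\}) = 0$ while $p \in \mathrm{supp}(\mu)$ witnesses that $p$ does not avoid $\mu$, so $p$ will be a weak P-point that is not a $\mu$-point.

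For the measure, I would fix a partition $\omega = \bigsqcup_n D_n$ into finite blocks with $|D_n| = 2^n$ and a non-principal ultrafilter $\mathcal{U}$ on the index set, and define $\mu([B]) = \lim_{n \to \mathcal{U}} |B \cap D_n|/|D_n|$, extending uniquely to a Radon measure on $\omega^\ast$. Non-atomicity follows by slicing each $D_n$ into $k$ near-equal parts to obtain, for any $k$, a clopen partition of $\omega^\ast$ into $k$ pieces of $\mu$-measure arbitrarily close to $1/k$.

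For the weak P-point, I would mimic the construction in the proof of Theorem \ref{Janek}: form the $n$-linked families $\mathcal{F}_n = \{F \subseteq D_n : |F|/|D_n| > 1 - 1/(n+1)\}$ and the centered family $\mathcal{F} = \{F \subseteq \omega : F \cap D_n \in \mathcal{F}_n \text{ for every } n\}$. To force the eventual ultrafilter into $\mathrm{supp}(\mu)$, I augment $\mathcal{F}$ with the complements $\omega \setminus B$ of all $\mu$-null $B \subseteq \omega$, producing a family $\mathcal{G}$. Centered-ness of $\mathcal{G}$ reduces to the estimate
\[
|F_1 \cap \cdots \cap F_m \cap D_n| > |D_n|\bigl(1 - \tfrac{m}{n+1}\bigr)
\]
combined with $|B \cap D_n|/|D_n| < 1/4$ for $\mathcal{U}$-many $n$ whenever $\mu([B]) = 0$: any such $n$ with $n \geq 4m$ produces a witness inside $(F_1 \cap \cdots \cap F_m) \setminus B$. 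Invoking van Mill's Theorem~2.5 from \cite{vanMill} on $\mathcal{G}$, exactly as in the proof of Theorem \ref{Janek}, then yields an ultrafilter $p \in \omega^\ast$ which is a weak P-point of $\omega^\ast$; since $\omega \setminus B \in p$ for every $\mu$-null $B$, every $B \in p$ satisfies $\mu([B]) > 0$, so $p \in \mathrm{supp}(\mu)$ as required.

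The main technical obstacle I foresee is verifying that the augmented family $\mathcal{G}$ still satisfies the structural hypothesis of van Mill's theorem: in the clean setting of Theorem \ref{Janek}, the $n$-linked bound holds in each block for every $n$, whereas the anti-null generators $\omega \setminus B$ only satisfy $(\omega \setminus B) \cap D_n \in \mathcal{F}_n$ for $\mathcal{U}$-many $n$. The density bounds above strongly suggest this causes no genuine trouble, since the $\mu$-null condition is asymptotic rather than pointwise, so at every finite stage of the inductive construction one can work inside a $\mathcal{U}$-set of sufficiently large indices. Still, confirming this requires revisiting van Mill's proof to see that extra centered constraints of this form can be carried through his induction.
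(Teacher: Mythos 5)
Your high-level plan --- exhibit a non-atomic measure on $\omega^\ast$ and then a weak P-point sitting inside its support --- is the same as the paper's, and your density measure $\mu$ is a perfectly good non-atomic measure. The gap is in the step that is supposed to produce the weak P-point inside $\mathrm{supp}(\mu)$. Van Mill's Theorem 2.5 of \cite{vanMill} applies to filters of the very specific product form $\{F\colon F\cap D_n\in\mathcal{F}_n \mbox{ for every } n\}$ with each $\mathcal{F}_n$ an $n$-linked family on the $n$-th block, and it asserts that \emph{some} ultrafilter extending \emph{that} filter is a weak P-point. Your augmented family $\mathcal{G}$ is not of this form: for a $\mu$-null $B$, the set $(\omega\setminus B)\cap D_n$ lies in $\mathcal{F}_n$ only when $|B\cap D_n|<|D_n|/(n+1)$, and $\mu$-nullity only gives $|B\cap D_n|/|D_n|\to_{\mathcal{U}}0$ in the sense ``for each $\varepsilon$, for $\mathcal{U}$-many $n$''; the diagonal bound with threshold $1/(n+1)$ can fail for every $n$ (take $B$ with $|B\cap D_n|=\lfloor|D_n|/\log n\rfloor$). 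So the hypothesis of Theorem 2.5 simply does not hold for $\mathcal{G}$. The centeredness of $\mathcal{G}$, which you do verify correctly, only yields \emph{some} ultrafilter in $\mathrm{supp}(\mu)$, with no weak P-point guarantee. The step you defer --- ``revisiting van Mill's proof to see that the extra constraints can be carried through his induction'' --- is not a routine verification: it asks for a weak P-point of $\omega^\ast$ containing $\mathfrak{c}$ many prescribed sets (the complements of all $\mu$-null sets), i.e.\ a weak P-point of $\omega^\ast$ lying in the particular closed ccc subset $\mathrm{supp}(\mu)$, and that is essentially the entire content of the theorem. It is not even clear that your particular $\mathrm{supp}(\mu)$ contains a weak P-point of $\omega^\ast$ at all.

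The paper sidesteps exactly this difficulty by changing the ambient algebra: it works in the Stone space $Y$ of the measure algebra $\mathbb{B}$, where there are no nonzero null elements and hence \emph{every} point automatically lies in the support of the canonical measure; Theorem \ref{Janek} supplies a weak P-point of $Y$, and Simon's theorem embeds $Y$ into $\omega^\ast$ as a $\mathfrak{c}$-OK set, hence a weak P-set, so that a weak P-point of $Y$ is a weak P-point of $\omega^\ast$. To salvage your approach you would need either an analogous transfer or a genuinely new argument that van Mill's construction can absorb the whole co-null filter; as written, the proof is incomplete at its crucial step.
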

\begin{proof} By \cite{Simon_ind} if $\cc{B}$ is a complete Boolean algebra of cardinality $\mathfrak{c}$, then $St(\cc{B})$ can be
	embedded as a $\mathfrak{c}$-OK subset of $\omega^{\ast}$. Therefore, we can embed the Stone space $Y$ of the measure algebra into $\omega^{\ast}$ as an $\mathfrak{c}$-OK set.
	By Theorem \ref{Janek} there is a weak P-point in the Stone space of the measure algebra. Then $Y$ contains a weak P-point $p$ (relatively to $Y$). Hence, as $Y$ is a weak P-set, $p$ is a weak P-point in $\omega^{\ast}$. There is the canonical
	(Lebesgue)
	non-atomic measure $\lambda$ on $Y$ with $supp(\lambda)= \overline{Y}$. Hence $p\in supp(\lambda)$ and so it is not a $\mu$-point.
\end{proof}

Now, we will come back to Remark \ref{betaomega}. Let $x$ be a $\mathfrak{c}$-OK point in $\beta\omega$. Then, $x$ is a $\mu$-point, but $x$ is not a weak P-point (because of Proposition \ref{sep}). We will use this result to show that there is a
$\mu$-point which is not a weak $P$-point in $\omega^\ast$.

\begin{theorem}
There is $\mu$-point in $\omega^\ast$ which is not a weak $P$-point. 
\end{theorem}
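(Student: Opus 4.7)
The plan is to mirror the previous theorem, swapping the roles of weak P-point and $\mu$-point. First, using Simon's theorem \cite{Simon_ind} applied to the complete Boolean algebra $\mathcal{P}(\omega)$ of cardinality $\mathfrak{c}$, I will embed $\beta\omega$ into $\omega^\ast$ as a $\mathfrak{c}$-OK subset $Y$ via a homeomorphism $\phi\colon \beta\omega\to Y$, and set $D=\phi(\omega)$, a countable subset dense in $Y$.

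Next, fix a $\mathfrak{c}$-OK point $x\in\omega^\ast$ (which exists by Kunen's theorem). As noted in Remark \ref{betaomega}, when $x$ is viewed inside $\beta\omega$ the extension-of-measures argument shows that $x$ is a $\mu$-point of $\beta\omega$, while by Proposition \ref{sep} (separability) it fails to be a weak P-point of $\beta\omega$. Set $p=\phi(x)\in Y$; through the homeomorphism $\phi$, the point $p$ inherits both properties relative to $Y$: it is a $\mu$-point of $Y$ and is not a weak P-point of $Y$.

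That $p$ is not a weak P-point of $\omega^\ast$ is then immediate: $D$ is a countable subset of $\omega^\ast\setminus\{p\}$ and $p\in\overline{D}^{Y}\subseteq\overline{D}^{\omega^\ast}$.

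The main obstacle is to show that $p$ is still a $\mu$-point of $\omega^\ast$. Suppose, toward contradiction, that some non-atomic Radon measure $\mu$ on $\omega^\ast$ satisfies $\mu(\{p\})=0$ and $p\in\mathrm{supp}(\mu)$. The goal is to manufacture a non-atomic measure $\nu$ on $Y$ with $\nu(\{p\})=0$ and $p\in\mathrm{supp}(\nu)$, contradicting the $\mu$-point property of $p$ in $Y$. The construction will exploit the $\mathfrak{c}$-OK property of $Y$ in $\omega^\ast$: by outer regularity of $\mu$ at the closed set $Y$, choose open $W_n\supseteq Y$ with $\mu(W_n\setminus Y)\to 0$; then the refinement system $(V_\alpha)_{\alpha<\mathfrak{c}}$ furnished by $\mathfrak{c}$-OK provides control on the measures of finite intersections $V_{\alpha_1}\cap\dots\cap V_{\alpha_n}$. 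Combining this control with Lemma \ref{Lemma} inside the ccc space $\mathrm{supp}(\mu)$ is intended to force $\mu(Y)>0$ and to place $p$ inside $\mathrm{supp}(\mu|_Y)$, so that $\nu=\mu|_Y/\mu(Y)$ witnesses the required contradiction. The main difficulty lies in the measure-theoretic bookkeeping for these finite intersections, since the $\mathfrak{c}$-OK property constrains the behaviour of $\mu$ outside $Y$ rather than on $Y$ itself, and one must ensure that the relevant antichain inside $\mathrm{supp}(\mu)$ produced by Lemma \ref{Lemma} is truly uncountable.
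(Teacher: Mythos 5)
Your construction coincides with the paper's: embed $\beta\omega$ into $\omega^\ast$ as a $\mathfrak{c}$-OK set $Y$ via Simon's theorem, take for $p$ the image of a $\mathfrak{c}$-OK point of $\omega^\ast$, and note that $p$ lies in the closure of the countable dense set $\phi(\omega)$, so it is not a weak P-point of $\omega^\ast$. Everything up to and including your step 3 is correct and is exactly what the paper does.

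The gap is your step 4, which is the whole content of the theorem, and the sketch you give there does not go through as stated. The intermediate goal you set yourself --- ``force $\mu(Y)>0$'' --- is neither needed nor achievable: a non-atomic measure can perfectly well satisfy $\mu(Y)=0$ while its support still meets $Y$ and even contains $p$, so no bookkeeping with refinement systems and outer regularity will yield $\mu(Y)>0$, and an argument that hinges on producing a normalized $\nu=\mu|_Y/\mu(Y)$ cannot be completed. What actually closes the proof are two facts you circle around but never assemble: (a) the support $L=\mathrm{supp}(\mu)$ of any measure is ccc, hence so is its \emph{open} subspace $L\setminus Y$ (ccc is not hereditary, but it passes to open subspaces, and $L\setminus Y$ is open in $L$ because $Y$ is closed); (b) Kunen's theorem, whose combinatorial core is Lemma \ref{Lemma}, applies to $\mathfrak{c}$-OK \emph{sets} and not only points: a ccc set disjoint from a $\mathfrak{c}$-OK set has closure disjoint from that set. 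Together these give $\overline{L\setminus Y}\cap Y=\emptyset$, in particular $p\notin\overline{L\setminus Y}$. Now put $F=\overline{L\setminus Y}\cup\mathrm{supp}(\mu|_Y)$; this closed set has $\mu(F)=1$, so $\mathrm{supp}(\mu)\subseteq F$, while $p$ belongs to neither piece: not to the first by the above, and not to the second because $p$ is a $\mu$-point of $Y$ and $\mu|_Y$ (normalized, when $\mu(Y)>0$; otherwise this piece is empty) is a non-atomic measure on $Y$ vanishing at $p$. Hence $p\notin\mathrm{supp}(\mu)$. In short: the case $\mu(Y)>0$ is handled by the $\mu$-point property of $p$ inside $Y$, the mass outside $Y$ is handled by the ccc-versus-$\mathfrak{c}$-OK repulsion, and no claim about the size of $\mu(Y)$ is ever required.
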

\begin{proof}
	Since $\mathcal{P}(\omega)$ is a complete Boolean algebra, by \cite{Simon_ind}, $\beta\omega$ can be embedded as a $\mathfrak{c}$-OK subset of $\omega^\ast$. Denote this copy by $K$. 
	Let $x\in K$ be a $\mathfrak{c}$-OK point (relatively to $K$). The point $x$ is not a weak P-point (as it is in the closure of the copy of $\omega$ in $K$). We will show that it avoids non-atomic measures.

 Let $\mu$ be a non-atomic
	measure and let $L$ be its support. Then $L\setminus K$ is ccc. Therefore, $\overline{L \setminus K} \cap K = \emptyset$ as $K$ is a $\mathfrak{c}$-OK set. 
	So, we may assume without loss of generality, that $\mu$ is supported by $K$ (considering $\mu_{|K}$ instead of $\mu$, if needed). But then $x$ is $\mathfrak{c}$-OK point in $K$ and so $x \notin \mathrm{supp}(\mu)$. 
\end{proof}



\section{Ultrafilters avoiding measures and measure zero ultrafilters}\label{baum}



The space $\omega^\ast$ is very far from supporting a strictly positive measures (e.g. as it contains an antichain of open sets of size $\mathfrak{c}$). However, as we have learned in Proposition \ref{strictly-positive}, this is not enough to deduce that it contains a $\mu$-point. 
We may try to construct $\mu$-point in $\omega^\ast$ by transfinite induction but we immediately face the following problem: there are too many non-atomic measures to 'kill' them inductively one by one. 

Fortunately, we are able to 'kill' many measures at the same time and in fact we will not 'kill' measures but rather trees of subsets of $\omega$. With every non-atomic measure we can associate a tree $T$ witnessing its non-atomicity. Adding a set appropriate for
this tree will ensure that the resulting ultrafilter will avoid all the measures for which $T$ is a non-atomicity witness.

We will present the details of this idea. Then, we show its connection to measure zero ultrafilters (in the sense of Baumgartner) and the constraints of this method.

So, we will start with several definitions concerning trees. 

A family $\mathcal{T}$ is a \emph{tree of subsets of} $\omega$ if $\mathcal{T} = \{T_s\colon s\in 2^{<\omega}, \ T_s\subseteq \omega\}$ where
\begin{itemize}
	\item if $s,t \in 2^{<\omega}$, and $s$ extends $t$, then, $T_s\subseteq^{\ast} T_t$,
	\item $\bigcup\{T_s \colon s \in 2^k\} =^{\ast} \omega$ and $T_s \cap T_t =^{\ast} \emptyset$ for each $s, t \in 2^k$ for every level $k\in \omega$ (notice that we allow $T_s$ to be empty).
\end{itemize}

A tree $T$ is called \emph{a full tree} if for every $s \in 2^ {<\omega}$ the set $T_s$ is infinite. For a set $X\subseteq \omega$ and a tree $\mathcal{T}$ of subsets of $\omega$ let \[ T_X = \{x \in 2^\omega\colon \forall k \ |T_{x|k} \cap X| = \omega \}. \]

Fix an ideal $\mathcal{I}$ on $2^\omega$. We say that a set $X$ \emph{$\mathcal{I}$-diagonalizes} a tree $\mathcal{T}$ (of subsets of $\omega$) if $T_X \in \mathcal{I}$. An ultrafilter $\mathcal{I}$-diagonalizes a tree $\mathcal{T}$ if it
contains an element $\mathcal{I}$-diagonalizing $\mathcal{T}$. Finally, we say that an ultrafilter \emph{$\mathcal{I}$-diagonalizes the trees} if it $\mathcal{I}$-diagonalize every tree of subsets of $\omega$.

\begin{remark} \label{duality} The property of $\mathcal{I}$-diagonalizing trees can be stated in the topological terms, using the Stone duality. Namely, an ultrafilter $\mathcal{U}$ $\mathcal{I}$-diagonalize (full) trees if and only if for each continuous (surjective) function $f\colon
	\omega^\ast \to 2^\omega$, there is a clopen $C\subseteq \omega^\ast$ such that $f[C] \in \mathcal{I}$.
\end{remark}

\begin{example} Suppose $\mathcal{U}$ is a P-point in $\omega^\ast$. Let $Fin$ denote the ideal of finite subsets of $\omega$. Then $\mathcal{U}$ $Fin$-diagonalizes every tree of subsets of $\omega$. Indeed, every tree of subsets of $\omega$ has a
	branch of elements of $\mathcal{U}$. Let $X \in \mathcal{U}$ be a pseudo-intersection of this branch, promised by P-pointness. Then $|T_X| = 1$. 
\end{example}

So, the property of $\mathcal{I}$-diagonalizing trees, can be seen as a generalization of the P-point property. We will first prove how the ultrafilters which $\mathcal{N}$-diagonalize trees of subsets of $\omega$ can be used to
construct $\mu$-points. Then, we will see that the notion of ultrafilters $\mathcal{I}$-diagonalizing trees is in fact the well-known notion of $\mathcal{I}$-ultrafilters in disguise.

	By \emph{the Cantor algebra} $\mathbb{C}$ we mean the unique, up to isomorphism, countable atomless Boolean algebra. We will identify $\mathbb{C}$ with the $Clop(2^\omega)$, the Boolean algebra of clopen subsets of $2^\omega$. Notice that $\mathrm{St}(\mathbb{C}) =
	2^\omega$.

\begin{lemma}
\label{partition}
	Let $\mu$ be a measure on the Cantor algebra $\mathbb{C}$. For every $k \in \omega$ and for each $\varepsilon>0$, there is a partition $\{S_1, \dots , S_k \}$ of unity of $\mathbb{C}$, such that \[ \forall i\leq k \ \ \ |\mu(S_i)-\frac{1}{k}|<\varepsilon.\]
\end{lemma}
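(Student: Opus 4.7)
The statement is essentially a bin-packing claim: after chopping the unit into very small pieces, we can bundle them into $k$ bundles of nearly equal measure. So the plan has two stages: first produce a sufficiently fine partition using non-atomicity of $\mu$ (which I take to be a standing hypothesis here, since the lemma sits inside the discussion of non-atomic measures and fails outright for, say, a Dirac measure concentrated at a point of $2^\omega$), and then greedily rearrange.

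\textbf{Step 1: a fine partition.} Fix $k$ and $\varepsilon$, and set $\delta = \varepsilon/k$. Applying the definition of non-atomic measure on a Boolean algebra to $\mu$ and $\delta$, I obtain a finite partition $\{P_1,\dots,P_N\}$ of the unit of $\mathbb{C}$ with $\mu(P_j) < \delta$ for every $j \leq N$.

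\textbf{Step 2: greedy redistribution.} Initialize $S_1 = \cdots = S_k = \mathbf{0}_{\mathbb{C}}$. Process the pieces $P_1,\dots,P_N$ in any order; at each step, adjoin the current $P_j$ to whichever $S_i$ currently has the smallest $\mu$-value (breaking ties arbitrarily), replacing $S_i$ by $S_i \vee P_j$. Because $\mathbb{C}$ is closed under finite joins, the final $S_1,\dots,S_k$ are in $\mathbb{C}$, and since $\{P_j\}$ partitioned the unit, $\{S_1,\dots,S_k\}$ is still a partition of the unit.

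\textbf{Step 3: the estimate.} Let $M = \max_i \mu(S_i)$ and $m = \min_i \mu(S_i)$, and let $S_{i_\star}$ be a bin attaining the maximum. Consider the last piece $P_{j_\star}$ added to $S_{i_\star}$: at the moment it was added, $S_{i_\star}$ had the smallest running total, and totals only grow thereafter, so
\[
M - \mu(P_{j_\star}) \;\leq\; m,
\qquad \text{hence} \qquad M - m \;\leq\; \mu(P_{j_\star}) \;<\; \delta.
\]
Since $\sum_{i=1}^k \mu(S_i) = 1$ we have $m \leq 1/k \leq M$, and combined with $M - m < \delta = \varepsilon/k$ this gives $|\mu(S_i) - 1/k| < \varepsilon/k < \varepsilon$ for every $i$.

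\textbf{Expected obstacle.} There is essentially no technical obstacle beyond checking that the greedy estimate $M - m \leq \max_j \mu(P_j)$ applies in the setting of a finitely additive measure on a Boolean algebra, which it does verbatim. The only conceptual point worth flagging is that the argument requires $\mu$ to be non-atomic in order to realize Step 1; without that, the conclusion is false (e.g.\ for a $0$-$1$ measure on $\mathbb{C}$), so this should be read as a tacit hypothesis from the surrounding context.
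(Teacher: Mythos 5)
Your proof is correct, and it follows the same overall strategy as the paper's: invoke non-atomicity to refine the unit into pieces of measure less than $\delta$, then group the pieces greedily into $k$ bins. The grouping rule differs, though. The paper fills the bins one at a time, packing consecutive pieces into $S_i$ until the running total would exceed $1/k$, so that $0 \le 1/k - \mu(S_i) < \delta$ for $i<k$, and then treats the leftover bin $S_k$ by a separate telescoping computation; this forces the slightly awkward choice $\delta < \min(1/k, \varepsilon/(k-1))$. Your least-loaded-bin assignment treats all $k$ bins symmetrically, and the standard load-balancing bound $M - m \le \max_j \mu(P_j) < \delta$ together with $m \le 1/k \le M$ gives a single uniform estimate with the cleaner constant $\delta = \varepsilon/k$ (in fact you prove the stronger bound $|\mu(S_i)-1/k| < \varepsilon/k$). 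The only cosmetic caveat is that when $\varepsilon$ is large a bin could in principle remain the zero element of $\mathbb{C}$, so if one insists that a partition of unity consist of nonzero elements one should take $\delta = \min(\varepsilon,1)/k$ or note that the claim is trivial for $\varepsilon \ge 1$; this does not affect the measure estimate. You are also right that non-atomicity is a tacit standing hypothesis: the paper's own proof begins ``By non-atomicity of $\mu$'' even though the lemma statement omits the word, and the lemma is false for atomic measures.
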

\begin{proof}
	Choose any $\delta>0$ such that \[ \delta < \min(\frac{1}{k}, \frac{\varepsilon}{k-1}).\] By non-atomicity of $\mu$, pick any partition $\{P_1, \dots P_r\}$ of unity of $\mathbb{C}$ into sets of measure smaller than $\delta$. Let $a_i=\mu(P_i)$ for
	$i\leq r$. We are going to construct $S_i$'s for $i<k$ by induction, constructing simultaneously a sequence $(n_i)$ of integers. Let $n_1=1$. For $i<k$ let 
	\[ n_{i+1}=\max\{m \colon \sum_{j=n_i}^{m}a_i \leq \frac{1}{k} \}+1\] and let  \[ S_i=\bigcup \limits_{j\in [n_i, n_{i+1})} P_j.\]
	By maximality of $n_i$'s we have  $ 0<1/k-\mu(S_i)\leq \delta < \varepsilon$ for each $i<k$. Define $S_k=\omega \setminus \bigcup_{i<k}S_i$. Then
\begin{align*}
&\mu(S_k)= k\cdot\frac{1}{k}- \sum_{i=1}^{i<k}\mu(S_i)=
\frac{1}{k}+\sum_{i=1}^{i<k}\left(\frac{1}{k}-\mu(S_i)\right)\leq \frac{1}{k}+(k-1)\delta \leq
\frac{1}{k}+\varepsilon,
\end{align*}
	and so \[ |\mu(S_k)-1/k| < \varepsilon. \]
\end{proof}

In what follows by $\lambda$ we will mean the Lebesgue measure on $2^\omega$.

\begin{theorem}\label{non-atomic-tree}
	For every non-atomic measure $\mu$ on $2^\omega$, there exists a surjective function $f\colon 2^\omega \to 2^\omega$, such that whenever $S\subseteq 2^\omega$ is closed and $\lambda(S)=0$, then $\mu(f^{-1}(S))=0$. 
\end{theorem}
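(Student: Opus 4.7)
The plan is to build $f$ as a tree map: construct a refining sequence of clopen partitions $\{C_v : v \in 2^k\}$ of $2^\omega$ for $k \in \omega$ whose $\mu$-masses approximate Lebesgue masses $2^{-k}$, and then define $f(x) = b$ where $b$ is the unique branch with $x \in C_{b|k}$ for every $k$. Set $C_\emptyset = 2^\omega$. Assuming $C_v$ is built and $\mu(C_v) > 0$, the clopen subalgebra of $C_v$ is a copy of the Cantor algebra $\mathbb{C}$, and the conditional measure $\mu_v := \mu(\cdot \cap C_v)/\mu(C_v)$ is non-atomic on it; apply Lemma \ref{partition} to $\mu_v$ with $k = 2$ and $\varepsilon_v := 2^{-|v|-3}$ to split $C_v$ into nonempty clopens $C_{v^\frown 0}, C_{v^\frown 1}$ with $|\mu_v(C_{v^\frown i}) - 1/2| < \varepsilon_v$. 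If $\mu(C_v) = 0$, split $C_v$ into two nonempty clopens arbitrarily (possible since any nonempty clopen in $2^\omega$ is homeomorphic to $2^\omega$).

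The main technical point is to guarantee a uniform bound $\mu(C_v) \leq c \cdot 2^{-|v|}$. Setting $r_v := 2^{|v|}\mu(C_v)$, the construction yields $r_{v^\frown i} \leq r_v (1 + 2\varepsilon_v)$, so along any branch
\[
r_v \leq \prod_{j < |v|} \bigl(1 + 2^{-j-2}\bigr) \leq \exp\!\left(\sum_{j=0}^\infty 2^{-j-2}\right) < 2,
\]
giving $\mu(C_v) \leq 2 \cdot 2^{-|v|}$ for all $v$ (trivially so in the $\mu(C_v)=0$ case). The choice $\varepsilon_v = 2^{-|v|-3}$ is exactly what makes the product convergent.

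Now define $f \colon 2^\omega \to 2^\omega$ by $f(x) = b$ where $x \in C_{b|k}$ for each $k$; this is well-defined because $\{C_v : v \in 2^k\}$ partitions $2^\omega$. Continuity follows from $f^{-1}([v]) = C_v$. Surjectivity follows from compactness: for each $b \in 2^\omega$, $\bigcap_k C_{b|k}$ is a decreasing intersection of nonempty compact sets, hence nonempty, and any point in it maps to $b$.

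Finally, let $S \subseteq 2^\omega$ be closed with $\lambda(S) = 0$, and let $T \subseteq 2^{<\omega}$ be the tree of $S$, so $S = \bigcap_k U_k$ with $U_k := \bigcup_{v \in T \cap 2^k} [v]$ and $\lambda(U_k) \downarrow 0$. Then
\[
f^{-1}(S) = \bigcap_k \bigcup_{v \in T \cap 2^k} C_v,
\]
and the uniform bound gives
\[
\mu\bigl(f^{-1}(U_k)\bigr) \leq \sum_{v \in T \cap 2^k} \mu(C_v) \leq 2 \sum_{v \in T \cap 2^k} 2^{-k} = 2 \lambda(U_k) \xrightarrow{k \to \infty} 0,
\]
so $\mu(f^{-1}(S)) = 0$, as required. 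The crux is thus the summable choice of errors $\varepsilon_v$ driving the conditional splits; everything else is bookkeeping.
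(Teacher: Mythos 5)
Your proposal is correct and follows essentially the same route as the paper: both use Lemma \ref{partition} to build a refining tree of clopen sets $C_v = f^{-1}([v])$ with the uniform bound $\mu(C_v) \leq 2\lambda([v])$, and then cover a closed null set by level-$k$ cylinders. The only cosmetic differences are that the paper packages the construction as an embedding of the Cantor algebra with additive error terms $\varepsilon_k = 2^{-2k}$, while you work with conditional measures and multiplicative errors; both yield the same factor-$2$ estimate.
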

\begin{proof}
	For every sequence $\{\varepsilon_i\colon \ \varepsilon_i>0, \  i \in \omega\}$, it is possible to construct an embedding $i\colon \mathbb{C} \to \mathbb{C}$, such that for every level $k$ and every node $v \in 2^k$ we have \[\max(\mu(i(v^\frown
	0)),\mu(i(v^\frown 1)))\leq
	\frac{1}{2}\mu(i(v))+\varepsilon_{k+1}.\]
	We will construct $i$ by induction.  Suppose that $i(u)$ is defined for all nodes $u \in 2^{\leq k}$. For $v \in 2^k$ consider the Boolean algebra $\mathbb{C}\restriction i(v)$ and the restriction of $\mu$ respectively. Since the restriction of
	a non-atomic measure on an element of positive measure is non-atomic, using Lemma \ref{partition} we can divide $i(v)$ into 2 sets with the 'precision' \[ \varepsilon=\frac{\varepsilon_{k+1}}{\mu(i(v))} \] to define $i(v^\frown 0),i(v^\frown 1)$
	as desired. Continue for all nodes $v\in 2^k$ and all the levels $k$. The function $i\colon 2^{<\omega}\to \mathbb{C}$ can be extended to the desired embedding.

Now, let $\varepsilon_k = 2^{-2k}$ for every $k\in \omega$.
	Let  $f_k, g_k \colon \mathbb{R} \to \mathbb{R}$ be defined by $f_k(x)= \frac{1}{2}x+ \varepsilon_k$, and $g_k=(f_k \circ \dots \circ f_1)$. Then \[ g_k(1)= 2^{-k} + \varepsilon_1 2^{-k+1} + \dots + \varepsilon_k = 2^{-k} + 2^{-k}\sum_{i=1}^k 2^i \varepsilon_i =2^{-k}+2^{-k}\sum_{i=1}^{k}2^{-i}<2 \cdot
	2^{-k}.\]  Notice, that if $i$ is constructed as above, then for every $v\in 2^k$ we have $\mu(i(v))\leq g_k(1) < 2\cdot \lambda([v])$ (where $[v]$ is the clopen subset of $2^\omega$ generated by $v$). Suppose that $S$ is a closed subspace of
	$2^\omega$ such that $\lambda(S)=0$. Since $S$ is compact, and $\lambda$ is outer regular with respect to open sets, given $\delta>0$ we can find a clopen $U$, such that $S\subseteq U$,
	$\lambda(U)\leq \frac{\delta}{2}$, and without loss of generality $U= \bigcup_{j\leq n} [v_j]$ for some $n, \ k$ and $v_j \in 2^k$. 
	
	Let $f\colon 2^\omega \to 2^\omega$ be the continuous map dual to the embedding $i$, i.e. $f(y) = \bigcap \{[i(C)]\colon y \in [C]\}$. 

	Then 
 \[ \mu({f}^{-1}(S))\leq \mu({f}^{-1}[U])= \mu(i(U))= \sum_{j \leq n} \mu(i(s_j)) \leq 2 \cdot \sum_{j \leq n} \nu(s_j)= 2 \cdot \nu(U)< \delta\]
	As $\delta$ was arbitrary, $f$ is as desired.
\end{proof}

\begin{corr}
	For every non-atomic measure $\mu$ on $\beta \omega$, there is a tree $\mathcal{T}$ such that for every $U \subseteq \omega$ satisfying $T_U\in \mathcal{N}$ we have $\mu[U]=0$. Consequently, if an ultrafilter $\mathcal{N}$-diagonalizes full trees, then it avoids
	non-atomic measures.
\end{corr}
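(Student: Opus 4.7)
The plan is to re-run the proof of Theorem \ref{non-atomic-tree} with the Boolean algebra $\mathcal{P}(\omega)/\mathrm{fin}$ in place of the target copy of $\mathbb{C}$, and then read off the desired tree from the resulting embedding.

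Given a non-atomic measure $\mu$ on $\beta\omega$, observe that $\mu(\{n\})=0$ for every $n\in\omega$, so $\mu$ is concentrated on $\omega^\ast$ and, restricted to clopens, is a non-atomic Boolean measure on $\mathcal{P}(\omega)/\mathrm{fin}$. The two-piece splitting provided by Lemma \ref{partition} still holds in this setting since the proof only uses non-atomicity of $\mu$. I would thus inductively build an embedding $i\colon\mathbb{C}\to\mathcal{P}(\omega)/\mathrm{fin}$ satisfying, for every level $k$ and every $v\in 2^k$,
\[\max(\mu(i(v^\frown 0)),\mu(i(v^\frown 1)))\leq\tfrac{1}{2}\mu(i(v))+\varepsilon_{k+1},\]
with $\varepsilon_k=2^{-2k}$. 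Choose infinite representatives $T_s\subseteq\omega$ of $i([s])$, splitting each $T_s$ into two infinite halves $T_{s^\frown 0},T_{s^\frown 1}$; this is harmless since $\mu$ ignores finite modifications, and it guarantees that $\mathcal{T}=\{T_s\}$ is a \emph{full} tree of subsets of $\omega$. The same telescoping estimate as in the proof of Theorem \ref{non-atomic-tree} then gives $\mu(T_s)<2\lambda([s])$ for every $s\in 2^k$.

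Let $f\colon\omega^\ast\to 2^\omega$ be the continuous map dual to $i$, characterized by $f(p)\in[s]$ iff $T_s\in p$. The key inclusion is $[U]\cap\omega^\ast\subseteq f^{-1}(T_U)$: if $p\in[U]\cap\omega^\ast$ then for every $k$ both $U$ and $T_{f(p)|k}$ lie in the non-principal ultrafilter $p$, so $U\cap T_{f(p)|k}\in p$ and is therefore infinite, giving $f(p)\in T_U$ by definition. Observe that $T_U=\bigcap_k\bigcup\{[s]:s\in 2^k,\ |T_s\cap U|=\omega\}$ is closed in $2^\omega$. If $\lambda(T_U)=0$, then for every $\delta>0$ we can cover $T_U$ by a clopen $W=\bigcup_{j\leq n}[v_j]$ with $\lambda(W)<\delta/2$, and the final estimate from the proof of Theorem \ref{non-atomic-tree} yields
\[\mu(f^{-1}(W))=\sum_{j\leq n}\mu(T_{v_j})\leq 2\sum_{j\leq n}\lambda([v_j])=2\lambda(W)<\delta.\]
Since $\delta$ was arbitrary, $\mu([U])=\mu([U]\cap\omega^\ast)\leq\mu(f^{-1}(T_U))=0$, which gives the first assertion.

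For the consequence: given an ultrafilter $\mathcal{U}$ that $\mathcal{N}$-diagonalizes full trees and a non-atomic measure $\mu$ on $\omega^\ast$, choose $U\in\mathcal{U}$ with $T_U\in\mathcal{N}$ for the tree $\mathcal{T}$ produced above; then $[U]$ is a clopen neighbourhood of $\mathcal{U}$ with $\mu([U])=0$, so $\mathcal{U}\notin\mathrm{supp}(\mu)$. The only delicate point — more a bookkeeping matter than a real obstacle — is securing fullness of $\mathcal{T}$ when passing from Boolean equivalence classes to honest subsets of $\omega$, which is handled by always splitting into two infinite pieces.
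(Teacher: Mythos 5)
Your proof is correct. It runs on the same engine as the paper's argument --- Lemma \ref{partition}, the choice $\varepsilon_k=2^{-2k}$, and the telescoping bound $\mu(i(v))<2\lambda([v])$ --- but it is organized differently. The paper first pushes $\mu$ forward along an auxiliary surjection $f\colon\beta\omega\to 2^\omega$ chosen so that the image measure is non-atomic, then applies Theorem \ref{non-atomic-tree} to that image measure to get a second map $g\colon 2^\omega\to 2^\omega$, and reads the tree off the composition $g\circ f$; you instead transplant the proof of Theorem \ref{non-atomic-tree} into $\mathcal{P}(\omega)/\mathrm{fin}$ and build a single embedding $\mathbb{C}\to\mathcal{P}(\omega)/\mathrm{fin}$, equivalently one dual surjection $\omega^\ast\to 2^\omega$, directly. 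This buys a small economy (no intermediate factor) and has the virtue of making explicit two points the paper passes over quickly: that the tree can be taken full by always choosing infinite representatives and splitting them into infinite halves, and that the inclusion $[U]\cap\omega^\ast\subseteq f^{-1}(T_U)$ --- rather than the paper's identity $T_U=(g\circ f)[U]$ --- is all that is needed for the estimate $\mu([U])\le\mu(f^{-1}(T_U))$. The one hypothesis worth flagging more explicitly is that Lemma \ref{partition} needs the induced finitely additive measure on $\mathcal{P}(\omega)/\mathrm{fin}$ to be non-atomic in the Boolean sense (strongly continuous); this does follow from $\mu$ vanishing on singletons via the equivalence recorded in the Preliminaries, and it is precisely where the $\sigma$-additivity of $\mu$ enters.
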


\begin{proof}
	By non-atomicity of $\mu$ we can construct a surjective function $f \colon \beta\omega \to 2^\omega$ such that the image measure $\nu(\cdot)= \mu(g^{-1}(\cdot))$ is non-atomic on $2^\omega$.

	
	By Theorem \ref{non-atomic-tree} we can find a surjective function $g\colon 2^\omega \to 2^ \omega$ such that for every closed $S \subseteq 2^\omega$ if $\lambda(S)=0$, then $\nu(g^{-1}(S))=0$. 
	Let $\mathcal{T}$ be the tree of subsets of $\omega$ induced in a natural way by the tree $\{(g \circ f)^{-1}[v]\colon v\in 2^{<\omega}\}$ (of subsets of $\beta\omega$). Then, by surjectivity of the functions $f$ and $g$, the tree $\mathcal{T}$ is
	full. Also,  $T_U = (g \circ f)[ U ]$ for each $U
	\subseteq \omega$ (here we treat $U$ once as a subset of $\omega$ and once as a clopen subset of $\beta\omega$).

	Therefore, given that $\lambda(g\circ f[U])=0$ for some $U \subseteq \omega$, we have \[ \mu([U])\leq \mu(f^{-1} ( f [U]))= \nu(f[U])\leq \nu( g^{-1} (g (f[U])))=0.\]

\end{proof}

Using the above fact one can construct in a standard way, by a transfinite induction, a $\mu$-point which is not a P-point (at least under $CH$). It is enough to make sure to $\mathcal{N}$-diagonalize every tree of subsets of $\omega$. Instead of performing the construction, we will
show that ultrafilters $\mathcal{I}$-diagonalizing trees are in fact well known $\mathcal{I}$-ultrafilters, in the sense of Baumgartner. If $\mathcal{I}$ is the ideal of null sets, such ultrafilters are called measure zero ultrafilters.

\begin{definition}
Let $\mathcal{I}$ be an ideal on $2^\omega$ (containing singletons). We say than an ultrafilter $\mathcal{U}$ is an $\mathcal{I}$-ultrafilter if for every function $f\colon \omega \to 2^\omega$ there is $X \in \mathcal{U}$ such that $\overline{f[X]} \in \mathcal{I}$. 
\end{definition}

\begin{prop} Suppose $\mathcal{I}$ is a $\sigma$-ideal on $2^\omega$. An ultrafilter $\mathcal{U}$ $\mathcal{I}$-diagonalizes trees  if and only if it is an $\mathcal{I}$-ultrafilter. \end{prop}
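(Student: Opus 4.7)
The plan is to set up a dictionary between functions $f\colon \omega\to 2^\omega$ and (strict) trees of subsets of $\omega$, and then to show that the test sets $T_X$ and $\overline{f[X]}$ differ only by a countable set. The $\sigma$-ideal hypothesis on $\mathcal{I}$ is then strong enough to collapse the two properties.

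Given $f$, set $T^f_s = f^{-1}([s])$; the family $\{T^f_s\}$ is automatically a \emph{strict} partition tree, in the sense that $\{T^f_s\colon s\in 2^k\}$ is a genuine partition of $\omega$ and $T^f_{s^\frown i}\subseteq T^f_s$. Conversely, given a tree $\mathcal{T}$ satisfying only the $=^\ast$ conditions, I would first pass to a strict partition tree $\mathcal{T}'$ with $T'_s =^\ast T_s$, built inductively by splitting $T'_s$ into a genuine partition $T'_{s^\frown 0}\cup T'_{s^\frown 1}$ that agrees with $T_{s^\frown 0}, T_{s^\frown 1}$ modulo finite. Since the operator $X\mapsto T_X$ depends only on the $T_s$ modulo finite, we have $T'_X = T_X$ for every $X\subseteq \omega$, so I may assume $\mathcal{T}$ itself is strict. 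Then each $n\in\omega$ lies on a unique infinite branch $x_n\in 2^\omega$, and I define $f(n)=x_n$; this yields $f^{-1}([s])=T_s$ for every $s$.

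With the dictionary in place, both implications of the proposition reduce to the comparison
\[ T_X \subseteq \overline{f[X]} \subseteq T_X \cup C \]
for some countable $C\subseteq 2^\omega$. The inclusion $T_X \subseteq \overline{f[X]}$ is immediate: if $y\in T_X$, each clopen $[y|k]$ meets $f[X]$ because infinitely many $n\in X$ lie in $f^{-1}([y|k])$. For the other inclusion, if $y\in \overline{f[X]}\setminus T_X$ then some level $k$ satisfies $|f^{-1}([y|k])\cap X|<\omega$, so $[y|k]\cap f[X]$ is finite and $y$ is isolated in the compact metric space $\overline{f[X]}$; hence $\overline{f[X]}\setminus T_X$ is contained in the (necessarily countable) set of isolated points of $\overline{f[X]}$. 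Because $\mathcal{I}$ is a $\sigma$-ideal containing singletons, every countable subset of $2^\omega$ belongs to $\mathcal{I}$, so $\overline{f[X]}\in \mathcal{I}$ if and only if $T_X\in \mathcal{I}$. This gives both directions of the biconditional at once: feed the witness $X\in\mathcal{U}$ obtained from one property into the other.

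The main obstacle I anticipate is purely bookkeeping: one must verify cleanly that the strict-tree reduction does not change $T_X$, and one must resist the temptation to identify $\overline{f[X]}$ with $T_X$ on the nose, instead tracking the discrepancy as a subset of the isolated points of the closure. Once the $\sigma$-ideal hypothesis is used to absorb this countable discrepancy, the set-theoretic test $T_X\in\mathcal{I}$ and the topological test $\overline{f[X]}\in\mathcal{I}$ become interchangeable, and the equivalence drops out.
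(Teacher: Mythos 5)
Your proposal is correct and is essentially the paper's argument in combinatorial clothing: the paper passes through the Stone-dual reformulation (Remark on duality) and observes that $\overline{f(V)} = \beta f(C) \cup f(V)$ differs from the ultrafilter-part $\beta f(C)$ (your $T_X$) by the countable set $f(V)$, which the $\sigma$-ideal absorbs — exactly your comparison $T_X \subseteq \overline{f[X]} \subseteq T_X \cup C$. The only difference is that you explicitly verify the tree--function dictionary (strictification and $f^{-1}([s]) = T_s$) that the paper delegates to its unproved duality remark, and your isolated-points bound on the discrepancy can be shortened by noting $\overline{f[X]}\setminus T_X \subseteq f[X]$ directly.
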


\begin{proof}
	($\implies$) Consider a function $f\colon \omega \to 2^\omega$. By Remark \ref{duality} we can find $V\subseteq \omega$, such that for  $C=[V] \cap \omega^{\ast}$ we have that $\beta f (C) \in \cc{I}$ . Then $\overline{f(V)}= \beta f([V])=\beta
	f(C) \cup \beta f (V)$. Since $V$ is countable, we are done.

	($\impliedby$) Given $V\subseteq \omega$, such that $\overline{f(V)}\in \cc{I}$, for $C=[V] \cap \omega^{\ast}$ we have $\overline{f(V)}= \beta f([V])=\beta f(C) \cup \beta f (V)\supseteq \beta f(C)$.
\end{proof}

\begin{corr} The ultrafilters $\mathcal{N}$-diagonalizing trees
	are exactly the measure zero ultrafilters and so every measure zero ultrafilter is a $\mu$-point.
\end{corr}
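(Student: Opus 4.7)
The corollary is essentially a repackaging of the two immediately preceding results, so the proof plan is short. First, unwind the definitions: a measure zero ultrafilter is, by Baumgartner's definition, exactly an $\mathcal{N}$-ultrafilter, where $\mathcal{N}$ is the $\sigma$-ideal of Lebesgue null subsets of $2^\omega$ (which contains the singletons). Since $\mathcal{N}$ is a $\sigma$-ideal on $2^\omega$, the preceding proposition applies verbatim with $\mathcal{I} = \mathcal{N}$ and yields the equivalence: an ultrafilter $\mathcal{U}$ $\mathcal{N}$-diagonalizes every tree of subsets of $\omega$ if and only if $\mathcal{U}$ is a measure zero ultrafilter. This establishes the first assertion.

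For the second assertion, the plan is to feed the just-established equivalence into the corollary following Theorem \ref{non-atomic-tree}. The only point to notice is that every full tree is in particular a tree of subsets of $\omega$ (fullness is an extra requirement on the levels of the tree, not a weakening), so if $\mathcal{U}$ $\mathcal{N}$-diagonalizes every tree of subsets of $\omega$, then it $\mathcal{N}$-diagonalizes every full tree in particular. By the cited corollary, such an ultrafilter avoids every non-atomic measure, hence is a $\mu$-point. Chaining the two implications, every measure zero ultrafilter is a $\mu$-point.

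There is no real obstacle here, because both ingredients have already been proved; the only thing to be mildly careful about is not to confuse the quantifier strengths, since the corollary after Theorem \ref{non-atomic-tree} is phrased for \emph{full} trees while the preceding proposition gives diagonalization of \emph{all} trees. As noted, this goes in the direction one wants: having the stronger diagonalization property trivially entails the weaker one used in the cited corollary, so no additional argument is required.
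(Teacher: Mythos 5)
Your proposal is correct and matches the paper's (implicit) argument exactly: the corollary is stated without proof precisely because it follows by applying the preceding proposition with $\mathcal{I}=\mathcal{N}$ and then invoking the corollary after Theorem \ref{non-atomic-tree}, with the same observation that diagonalizing all trees is stronger than diagonalizing full trees. Your attention to the quantifier direction (full trees versus all trees) is exactly the right point to check, and it goes the way you say.
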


The measure zero ultrafilters were studied by Baumgartner in \cite{Baumgartner} and by Brendle in \cite{Brendle}. Let us mention two relevant results which gives examples of $\mu$-points which are not P-points.

\begin{theorem}(Theorem 1.4 in \cite{Baumgartner}) Under Martin's Axiom for $\sigma$-centered posets, there is a measure zero ultrafilter which is not a P-point. 
\end{theorem}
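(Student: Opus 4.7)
The plan is the standard MA($\sigma$-centered) transfinite induction of length $\mathfrak{c}$. I would build an increasing chain $(\mathcal{F}_\alpha)_{\alpha<\mathfrak{c}}$ of filters, starting with $\mathcal{F}_0$ generated by the cofinite sets together with $\{\omega\setminus B_n:n\in\omega\}$ for a fixed partition $\omega=\bigsqcup_{n\in\omega}B_n$ into infinite pieces, and enumerate in a single sequence of length $\mathfrak{c}$ all trees of subsets of $\omega$ (equivalently, all $f\colon\omega\to 2^\omega$) together with all $Y\subseteq\omega$. The non-P-point invariant I would maintain is that every $A\in\mathcal{F}_\alpha$ has infinite intersection with some $B_n$; the witnesses for non-P-pointness at the end will then be exactly the $\omega\setminus B_n$, since any pseudo-intersection of this family inside $\mathcal{U}$ would have to be almost disjoint from every $B_n$, violating the invariant.

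For a tree task $\mathcal{T}$ at stage $\alpha$ I would introduce a $\sigma$-centered poset $\mathbb{P}_\mathcal{T}$ whose conditions are triples $(s,U,\mathcal{S})$ with $s\in[\omega]^{<\omega}$ a finite approximation of the generic $X$, $U\subseteq 2^\omega$ a clopen set of measure $<2^{-|s|}$ covering the nodes of $\mathcal{T}$ at level $|s|$ still meeting $s$, and $\mathcal{S}\in[\mathcal{F}_\alpha]^{<\omega}$ a finite promise set. The order extends $s$ inside $\bigcap\mathcal{S}$, shrinks $U$, and enlarges $\mathcal{S}$; conditions sharing the same $(s,U)$ are compatible by union of the $\mathcal{S}$'s, which gives $\sigma$-centeredness. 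For each $A\in\mathcal{F}_\alpha$, each $\varepsilon>0$, and each $m\in\omega$ I would exhibit the obvious dense sets, together with a dense set ensuring the generic $X$ has infinite intersection with both $B_0$ and $\omega\setminus B_0$ (so the invariant is preserved for $X$). Applying MA($\sigma$-centered) to these $<\mathfrak{c}$-many dense sets would yield $X$ almost contained in every $A\in\mathcal{F}_\alpha$, preserving the invariant, and satisfying $T_X\subseteq\bigcap_pU_p$, a closed null set; I would add $X$ to form $\mathcal{F}_{\alpha+1}$. For a subset task $Y$ I would add whichever of $Y,\omega\setminus Y$ is compatible with $\mathcal{F}_\alpha$ and compatible with the invariant; crucially, if $Y$ is almost disjoint from every $B_n$, the alternative $\omega\setminus Y$ almost contains every $B_n$ and is therefore both compatible with $\mathcal{F}_\alpha$ and invariant-preserving, so the refusal is always possible.

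The main obstacle is verifying density of the measure-shrinking extensions in $\mathbb{P}_\mathcal{T}$: given $(s,U,\mathcal{S})$ and $\varepsilon>0$, one must extend $s$ inside $\bigcap\mathcal{S}$ (which is infinite because $\mathcal{F}_\alpha$ is a filter) and exhibit a clopen $U'\subseteq U$ with $\lambda(U')<\varepsilon$ still covering the tree-images of the enlarged $s$. This is precisely where the non-atomic splitting of Lemma~\ref{partition}, applied inside $U$ along the branches of $\mathcal{T}$ in the spirit of the construction in Theorem~\ref{non-atomic-tree}, is used to always carve out enough residual measure budget for the newly added points. Once density is in hand, the construction runs, and after $\mathfrak{c}$ stages the union $\mathcal{U}=\bigcup_{\alpha<\mathfrak{c}}\mathcal{F}_\alpha$ is an ultrafilter which $\mathcal{N}$-diagonalizes every tree of subsets of $\omega$ (hence, by the preceding corollary, is a measure zero ultrafilter) and is not a P-point (as witnessed by $\{\omega\setminus B_n\}_{n\in\omega}$ via the invariant).
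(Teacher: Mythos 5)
The paper offers no proof of this statement at all: it is quoted verbatim as Theorem~1.4 of Baumgartner's paper, so your attempt can only be judged on its own merits. Your overall architecture --- a length-$\mathfrak{c}$ induction under MA for $\sigma$-centered posets, a fixed partition $\{B_n\}$ with the invariant that every member of the filter meets some $B_n$ infinitely, and finite-approximation-with-promises posets --- is the right general shape, and your treatment of the subset tasks is essentially correct. But both of the verifications that carry the mathematical content of the theorem are either missing or wrongly addressed.

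First, density of $\mathbb{P}_{\mathcal{T}}$ genuinely fails as described, and the proposed repair does not apply. If new elements of $s$ must lie in tree-nodes covered by the current clopen $U$, then a condition whose promise set satisfies $\bigcap\mathcal{S}\subseteq^{\ast}\bigcup\{T_v\colon [v]\cap U=\emptyset\}$ admits no extension meeting a prescribed $A\in\mathcal{F}_\alpha$; nothing prevents $\mathcal{F}_\alpha$ from concentrating the tree-images of all its members away from $U$ or on a fixed set of positive measure. Lemma~\ref{partition} cannot rescue this: it splits the Cantor algebra with respect to a \emph{non-atomic measure}, whereas the obstruction here is combinatorial and there is no measure whose non-atomicity you could exploit. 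The standard fix is to anchor the shrinking clopen sets at branches lying in $\bigcap\{T_A\colon A\in\mathcal{F}_\alpha\}$ (nonempty by compactness, since the closed sets $T_A$ have the finite intersection property), so that every promise set provably has infinitely many elements in every allowed node; that idea is absent. (As a side remark, your measure constraint is vacuous: covering even one level-$|s|$ node already costs measure $2^{-|s|}$.) Second, the non-P-point invariant is not preserved at tree steps: you need, for \emph{every} $A\in\mathcal{F}_\alpha$, some $n$ with $X\cap A\cap B_n$ infinite, and arranging $|X\cap B_0|=|X\setminus B_0|=\omega$ says nothing about the sets $X\cap A$. This is not a routine omission, because it is in real tension with making $T_X$ null: the cheapest diagonalization forces $X\subseteq^{\ast}T_{x\restriction k}$ along a single branch $x$, i.e.\ $|T_X|=1$, but an ultrafilter for which every $f\colon\omega\to 2^\omega$ converges on a set of the ultrafilter is exactly a P-point. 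So the tree step must manufacture a $T_X$ that is null yet rich enough to carry, for each $A$, a branch along which $X\cap A\cap B_n$ stays infinite for a suitable $n$; designing that set and the corresponding dense sets is the heart of Baumgartner's argument and is missing from your sketch.
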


\begin{theorem}(\cite{Brendle}) It is consistent that there are measure zero ultrafilters but there are no P-points.
\end{theorem}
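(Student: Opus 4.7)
The plan is to produce a model via a countable-support iteration of length $\omega_2$ of proper forcings over a ground model of $\mathsf{CH}$, arranged so that (a) every potential P-point is destroyed along the way and (b) a fixed ground-model measure zero ultrafilter survives as a measure zero ultrafilter in the final extension.

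First I would work in a ground model $V \models \mathsf{CH}$ and build a measure zero ultrafilter $\mathcal{U}_0$ by transfinite induction using the tree-diagonalization machinery developed in Section \ref{baum}. The construction should be arranged to be maximally generic over the family of trees of subsets of $\omega$: at stage $\alpha$, enumerate pairs $(\mathcal{T}_\beta,N_\beta)_{\beta\leq\alpha}$ of trees together with null $G_\delta$ sets witnessing their diagonalizability, and choose $X_\alpha$ into $\mathcal{U}_0$ with $T_{X_\alpha}\subseteq N_\alpha$. This strong genericity will provide the leverage needed for the later preservation argument.

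Next I would design the iterand $\mathbb{Q}_\alpha$ to destroy P-points: at stage $\alpha$, given a name $\dot{\mathcal{V}}_\alpha$ for a candidate P-point in $V[G_\alpha]$, let $\mathbb{Q}_\alpha$ be a Shelah-style proper forcing as in the no-$P$-points model of \cite{Shelah}, adding a real that witnesses failure of the P-point property of $\dot{\mathcal{V}}_\alpha$. A standard $\diamondsuit$-type bookkeeping of length $\omega_2$ catches every ultrafilter name, so no P-point survives the iteration. Properness and $\aleph_2$-cc are obtained by the usual countable-support iteration lemmas.

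The main obstacle is clause (b): showing that the iteration preserves the statement \emph{$\mathcal{U}_0$ generates a measure zero ultrafilter}. By the corollary preceding the theorem, this reduces to showing that in the extension, for every tree $\mathcal{T}$ of subsets of $\omega$ (equivalently, every continuous map $\omega^\ast\to 2^\omega$, by Remark \ref{duality}), some $X\in\mathcal{U}_0$ has $T_X$ Lebesgue null. The natural tool is a Laver-property-style preservation theorem: if each iterand is sufficiently bounding, then every new tree is slalomed by countably many ground-model trees, and the genericity of $\mathcal{U}_0$ allows one to diagonalize the new tree using one of the ground-model witnesses $X_\beta$. The delicate technical heart of the argument — and what I expect to be the real difficulty — is to arrange Shelah's P-point-killing forcings so that they enjoy such a preservation property and that the property lifts through countable-support iteration; this is exactly the task carried out by Brendle in \cite{Brendle}.
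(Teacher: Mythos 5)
The paper does not actually prove this statement: it is quoted from \cite{Brendle} and used as a black box, so there is no internal argument to compare yours against. Judged on its own terms, your proposal is a plausible-sounding iteration scheme but not a proof --- you end by declaring that the ``real difficulty'' is ``exactly the task carried out by Brendle,'' which reduces the whole proposal to a restatement of the citation rather than an argument for it.

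Beyond that, there are two concrete gaps. First, you never address the most basic preservation requirement: that $\mathcal{U}_0$ still \emph{generates an ultrafilter} after $\omega_2$ stages. Every iterand adds new subsets of $\omega$, and each of them must be decided (mod finite) by one of the $\omega_1$ ground-model generators; this amounts to arranging $\mathfrak{u}=\omega_1<\mathfrak{c}$ in a model with no P-points, a strong extra commitment that sits in direct tension with the P-point--killing iterands, and your ``Laver-property-style'' preservation does not touch it (bounding/localization properties control new reals by ground-model slaloms; they do not make a fixed ground-model filter base decide new sets). Second, even granting ultrafilterhood, the reduction of new trees to ground-model trees does not work as stated: a tree of subsets of $\omega$ appearing at stage $\alpha$ is coded by a new real, and the Sacks or Laver property hands you a ground-model slalom capturing that real --- not a ground-model tree $\mathcal{T}_\beta$ whose designated witness $X_\beta\in\mathcal{U}_0$ also diagonalizes the new tree. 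The ``strong genericity'' of $\mathcal{U}_0$ over the ground-model trees therefore buys nothing about trees that do not yet exist, and this is precisely the point where the argument would have to do real work. A more promising shape --- and the one one would expect to find in \cite{Brendle} --- is to build the measure zero ultrafilter \emph{inside} the final model, deriving the feasibility of the length-$\mathfrak{c}$ transfinite construction from combinatorial features of the no-P-point model (for instance, the Sacks property of the iteration, which keeps $\mathrm{cof}(\mathcal{N})=\omega_1$), rather than trying to preserve a ground-model ultrafilter through the iteration.
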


However, one cannot hope to prove the existence of $\mu$-points in ZFC using measure zero ultrafilters:

\begin{theorem}(\cite{Shelah2}) Consistently, there are no measure zero ultrafilters.
\end{theorem}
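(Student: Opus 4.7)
The plan is to construct, starting from a model of CH, a forcing extension in which every ultrafilter $\mathcal{U}$ on $\omega$ fails to be measure zero, that is, one can find a function $f\colon\omega\to 2^\omega$ such that $\overline{f[X]}$ has positive Lebesgue measure for every $X\in\mathcal{U}$. The natural framework is a countable support iteration of proper forcings of length $\omega_2$, with a bookkeeping device that enumerates, in the course of the iteration, every (name for an) ultrafilter that might threaten to be measure zero in the final model.

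First I would isolate the single-step forcing $Q_{\mathcal{U}}$: given a candidate ultrafilter $\mathcal{U}$ in some intermediate model, $Q_{\mathcal{U}}$ should generically produce a function $f\colon\omega\to 2^\omega$ whose image on every member of $\mathcal{U}$ has positive outer measure. A plausible design is a tree-like forcing whose conditions are finite approximations to $f$ together with a promise that the image avoids certain measure-zero $G_\delta$ sets coded in the condition; a genericity argument then forces $\overline{f[X]}$ to have positive measure for every $X\in\mathcal{U}$. One must verify that $Q_{\mathcal{U}}$ is proper (ideally, $\omega^\omega$-bounding or at least Suslin proper, which is the standard Shelah setting), so that the countable support iteration remains proper, preserves $\omega_1$, and does not collapse $\omega_2$.

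The main step, and by far the hardest, is the preservation theorem: one must show that the iteration does not \emph{create} new measure zero ultrafilters in limit stages (nor at the final stage), so that every ultrafilter in the final extension has already been dealt with at some earlier stage by the bookkeeping. Following the Shelah template, this requires identifying an appropriate combinatorial invariant of names for reals, something like: for every sequence of names for elements of a hypothetical ultrafilter $\mathcal{U}$ and every name for $f\colon\omega\to 2^\omega$, there exists (in the ground model of the current stage) a closed set of positive measure that $f[X]$ meets for cofinally many $X\in\mathcal{U}$. One then proves that $Q_{\mathcal{U}}$ preserves this invariant, and that countable support iterations of proper forcings preserving it continue to preserve it at limits — this is the delicate part requiring a two-step or three-step preservation lemma analogous to those used for preservation of $\omega^\omega$-bounding or of outer measure.

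The main obstacle I expect is precisely this preservation: choosing the invariant so that it is strong enough to prevent measure zero ultrafilters at the final stage yet weak enough to be preserved by the single-step forcings and by countable support limits. Once preservation is in hand, the bookkeeping assembles the model: at stage $\alpha$ one treats the $\alpha$-th name for an ultrafilter, the iteration has length $\omega_2$ and the $\Delta$-system reflection on names of size $\aleph_1$ ensures every ultrafilter in the final model was addressed at some prior stage, yielding a model with no measure zero ultrafilters.
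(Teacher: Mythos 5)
The paper does not construct any model: it observes that every measure zero ultrafilter is a nowhere dense ultrafilter (if $\overline{f[X]}$ is a closed set of Lebesgue measure zero, then it has empty interior and hence is nowhere dense; this is Theorem 1.2 of Baumgartner's paper) and then simply cites Shelah's theorem that consistently there are no nowhere dense ultrafilters. Your proposal instead sets out to reprove a Shelah-style consistency result from scratch, and as written it is only a template, not a proof: the single-step forcing is never actually defined, its properness is asserted rather than verified, and the preservation theorem -- which you yourself identify as ``by far the hardest'' step -- is left entirely open. Citing the known result, as the paper does, is both legitimate and vastly shorter.

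Beyond incompleteness, the bookkeeping strategy you describe has a structural flaw. To show that an ultrafilter $\mathcal{U}$ of the \emph{final} model is not measure zero you must produce a single $f$ with $\overline{f[X]}$ of positive measure for \emph{every} $X\in\mathcal{U}$, including sets of $\mathcal{U}$ added after the stage at which $f$ was chosen. An ultrafilter of the final model has size $\mathfrak{c}=\aleph_2$ there and does not appear at any intermediate stage; handling its trace $\mathcal{U}\cap V_\alpha[G_\alpha]$ at stage $\alpha$ is not enough, because the quantifier is $\exists f\,\forall X$ and later, smaller members of $\mathcal{U}$ can ruin the chosen $f$. Likewise, reflecting to a subfilter of size $\aleph_1$ does not help: defeating a subfilter says nothing about the full ultrafilter. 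This is precisely why Shelah's actual argument is not a bookkeeping argument but a genuine preservation theorem showing that the generic object added at a stage continues to work against all sets added cofinally later; everything you would need to prove is concentrated there, and your sketch does not supply it.
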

\begin{proof} 
	Every measure zero ultrafilter is a nowhere dense ultrafilter (i.e. $\mathcal{I}$-ultrafilter for $\mathcal{I}$ being the ideal of nowhere dense sets), see \cite[Theorem 1.2]{Baumgartner}. Shelah proved (see \cite{Shelah2}) that consistently there
	are no nowhere dense ultrafilters.
\end{proof}

The last theorem shows that the property of being a measure zero ultrafilter is significantly stronger than the property of being a $\mu$-point. Anyway, perhaps the above approach is still worth studying: to prove that an ultrafilter is a $\mu$-point
one does not need to diagonalize all the trees. E.g.  it is natural to care only about full trees. However, as the following proposition shows this does not give us more general notion. 

\begin{prop} An ultrafilter $\mathcal{U}$ $\mathcal{\cc{N}}$-diagonalizes full trees if and only if it $\mathcal{\cc{N}}$-diagonalizes trees.\end{prop}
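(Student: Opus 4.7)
The $(\Leftarrow)$ direction is immediate because every full tree is a tree. For $(\Rightarrow)$, my plan is to take an arbitrary tree $\mathcal{T}=\{T_s\}$ and build a full tree $\mathcal{T}'$ that agrees with $\mathcal{T}$ on a set in $\mathcal{U}$, so that the hypothesis applied to $\mathcal{T}'$ produces a witness for $\mathcal{T}$.

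Using that $\mathcal{U}$ is an ultrafilter, first pick an infinite, coinfinite $Z\subseteq\omega$ with $Z\notin\mathcal{U}$ (so $\omega\setminus Z\in\mathcal{U}$). Then fix any full tree $\mathcal{F}=\{F_s\}$ supported on $Z$---for instance, transport the standard dyadic full tree on $\omega$ via any bijection $\omega\to Z$. Define
\[
T'_s \;=\; (T_s\setminus Z)\,\sqcup\, F_s.
\]
A routine check shows $\mathcal{T}'=\{T'_s\}$ is a tree of subsets of $\omega$: the two summands live on the disjoint sides of $\omega=(\omega\setminus Z)\sqcup Z$, so almost-disjointness at each level and the partitioning $\bigcup_{s\in 2^k}T'_s=^{\ast}\omega$ inherit from $\mathcal{T}$ and $\mathcal{F}$ separately, as do the inclusions $T'_{s\frown i}\subseteq^{\ast} T'_s$. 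Since $F_s\subseteq T'_s$ is infinite for every $s$, $\mathcal{T}'$ is full.

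Now apply the hypothesis to $\mathcal{T}'$ to obtain $X'\in\mathcal{U}$ with $T'_{X'}\in\mathcal{N}$, and set $X=X'\cap(\omega\setminus Z)\in\mathcal{U}$. Because $X\cap Z=\emptyset$ the $F_s$-contribution drops out: $T'_s\cap X=(T_s\setminus Z)\cap X=T_s\cap X$ for every $s$, and hence $T'_X=T_X$. Combined with $T_X=T'_X\subseteq T'_{X'}\in\mathcal{N}$, this gives $T_X\in\mathcal{N}$, so $X$ witnesses that $\mathcal{U}$ $\mathcal{N}$-diagonalizes $\mathcal{T}$.

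I don't foresee a genuine obstacle; the single idea is to route the padding needed for fullness into a set $Z\notin\mathcal{U}$, where it is invisible to any witness $X$ that we first trim to lie in $\omega\setminus Z$.
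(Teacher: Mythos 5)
Your proof is correct, and it takes a genuinely different route from the paper. The paper argues on the dual side: it passes to the continuous map $f\colon\omega^\ast\to 2^\omega$ associated with the given tree, normalizes Lebesgue measure on the image $Y=f[\omega^\ast]$ to get a non-atomic measure $\mu$, and invokes Theorem \ref{non-atomic-tree} to produce a surjection $g\colon Y\to 2^\omega$ under which closed $\lambda$-null sets have $\mu$-null preimages; applying the hypothesis to the full tree dual to $g\circ f$ and pulling back through $g$ then yields the conclusion. Your argument is purely combinatorial: you graft a dummy full tree onto a set $Z\notin\mathcal{U}$, so that the padding is erased once the witness $X'$ is trimmed to $X=X'\cap(\omega\setminus Z)\in\mathcal{U}$, giving $T'_X=T_X\subseteq T'_{X'}$. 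This is shorter, avoids the measure theory entirely, and in fact proves the stronger statement that $\mathcal{I}$-diagonalizing full trees is equivalent to $\mathcal{I}$-diagonalizing all trees for an \emph{arbitrary} ideal $\mathcal{I}$ (only downward closure is used), whereas the paper's absolute-continuity argument is specific to $\mathcal{N}$. A further small advantage is that the edge cases the paper must dispose of separately (image of measure zero, isolated points of $Y$) simply never arise in your construction. The one point worth stating explicitly is the routine verification that $\mathcal{T}'$ satisfies the level conditions $\bigcup_{s\in 2^k}T'_s=^\ast\omega$ and $T'_s\cap T'_t=^\ast\emptyset$, which you correctly reduce to the two halves $\omega\setminus Z$ and $Z$ separately.
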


\begin{proof}
	Of course we need to prove only ($\implies$).

	Instead of trees we will deal with continuous functions as in Remark \ref{duality}. Given a continuous function $f\colon \omega^\ast \to 2^\omega$ let $Y=f(\omega^\ast)$. We may assume that $\lambda(Y)=m>0$ and that $Y$ does not have isolated
	points (otherwise it is easy to find a clopen set as in Remark \ref{duality}). Define a measure $\mu$ on $Y$ by \[ \mu(S)=\frac{\lambda(S)}{\lambda(Y)}.\] 
	The measure $\mu$ is non-atomic. Indeed, for $\varepsilon>0$ find $n$ such that $2^{-n}<\varepsilon \cdot m$. Then for a clopen partition $\{[v]\colon v \in 2^n\}$ we have \[\mu([v] \cap Y)=\frac{\lambda([v] \cap Y)}{\lambda(Y)}\leq \frac{\varepsilon \cdot m}{m}=\varepsilon\]
	
	By Theorem \ref{non-atomic-tree} we get a surjective function $g\colon Y \to 2^\omega$ such that for every closed $S\subseteq 2^\omega$, if $\lambda(S)=0$, then $\mu(g^{-1}(S))=0$. Then $g \circ f \colon \omega^\ast \to 2^\omega$ is surjective,
	and using Remark \ref{duality}, we can find a clopen set $C\subseteq \omega^\ast$  such that  $\mathcal{U}\in C$ and $\lambda((g\circ f)(C))=0$. 
	Hence
	\[ \lambda(f(C)) \leq  \mu(f(C)) \leq \mu( g^{-1} (g \circ f(C)) ) = 0. \] 
\end{proof}

Perhaps diagonalizing even more particular family of trees would be enough. However, we do not have any candidate for such family.

\section{$\mu$-points and Q-points}\label{q-points}

Although it is not related to the main subject of this article, perhaps it is worth to mention the relationship between $\mu$-points and $Q$-points. 

We will show that the property of being a $Q$-point does not imply (at least consistently) the property of avoiding non-atomic measures. This may be somehow surprising as elements of a $Q$-point can be arbitrarily 'rare' as subsets of
$\omega$. So, a $Q$-point cannot extend any density filter (and the standard non-atomic measures on $\omega^\ast$ are exactly extensions of density filters, see e.g. \cite{Blass2}).

Recall that for an ideal $\mathcal{I}$ on the real line, the covering number of $\mathcal{I}$, $\mathrm{cov}(\mathcal{I})$, is defined as the smallest cardinality of a family $\mathcal{A}\subseteq \mathcal{I}$ which covers $\mathbb{R}$. By
$\mathcal{M}$ we denote the $\sigma$-ideal of the meager subsets of $\mathbb{R}$. The dominating number, $\mathfrak{d}$, is defined as the smallest cardinality of the family $\mathcal{A}\subseteq \omega^\omega$ which is dominating, i.e. such that for every
$f\in \omega^\omega$ there is $g\in \mathcal{A}$ such that $g(n)\geq f(n)$ for every $n$.

\begin{theorem}
Consistently, there is a non-atomic measure $\mu$ on $\omega$ and a $Q$-point $x$ such that $x \in \mathrm{supp}(\mu)$.
\end{theorem}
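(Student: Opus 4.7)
The plan is to work under a set-theoretic hypothesis like $\mathrm{cov}(\mathcal{M})=\mathfrak{c}$ (which by Miller's construction suffices for the existence of $Q$-points; $CH$ is a special case), and to construct the non-atomic measure $\mu$ and the $Q$-point $x$ in tandem by transfinite induction, so that $\mu$ is tailored to accommodate the selectors that $x$ must contain.

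First I would build $\mu$ as a weighted density along an auxiliary ultrafilter. Enumerate the interval partitions $(\pi_\alpha)_{\alpha<\mathfrak{c}}$ and pick inductively a sequence of pairwise disjoint finite subsets $(S_n)_{n\in\omega}$ of $\omega$ with $|S_n|\to\infty$ such that, at stage $n$, $S_n$ is a common selector for the partitions $\pi_0,\dots,\pi_{n-1}$. This is possible since any interval partition admits infinitely many selectors, and finitely many such conditions are jointly satisfiable on a tail. Fix a non-principal ultrafilter $\mathcal{U}$ on $\omega$ and define, for $A\subseteq\omega$,
\[ \mu(A) \;=\; \lim_{n\to\mathcal{U}} \frac{|A\cap S_n|}{|S_n|}. \]
Because $|S_n|\to\infty$ and the $S_n$ are disjoint, $\mu$ is a finitely additive probability measure vanishing on finite sets; non-atomicity in the Boolean-algebra sense follows by partitioning each $S_n$ into $k$ nearly equal pieces. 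Thus $\mu$ extends to a non-atomic Radon measure on $\omega^{\ast}$. Crucially, for every $\alpha$ the set $S_n$ is a $\pi_\alpha$-selector for all $n\ge n_\alpha$, so every partition $\pi_\alpha$ admits selectors of $\mu$-measure $1$.

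Next I would construct $x$ by transfinite induction of length $\mathfrak{c}$, enumerating in parallel all subsets $(A_\alpha)_{\alpha<\mathfrak{c}}$ of $\omega$. I maintain a tower $(X_\alpha)_{\alpha<\mathfrak{c}}$ that is decreasing modulo finite, with each $X_\alpha$ a $\mu$-positive selector for $\pi_\alpha$, and with $X_\alpha\subseteq^\ast A_\beta$ or $X_\alpha\cap A_\beta=^\ast\emptyset$ for every $\beta<\alpha$. The successor step exploits the tailored shape of $\mu$: since $X_\alpha\cap S_n$ is a $\pi_\alpha$-selector for $n\geq n_\alpha$, and its relative frequency in $S_n$ tends to $\mu(X_\alpha)>0$ along $\mathcal{U}$, one can extract from $\{X_\alpha\cap S_n\colon n\ge n_\alpha\}$ a sub-union that is a $\pi_\alpha$-selector almost contained in $X_\alpha$ and in $A_\alpha$ or its complement, with $\mu$-positive total measure. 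The ultrafilter $x=\{A\subseteq\omega:\exists\alpha\;X_\alpha\subseteq^\ast A\}$ is then a $Q$-point (by the selector conditions) and sits in $\mathrm{supp}(\mu)$ (since every element contains some $\mu$-positive $X_\alpha$).

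The main obstacle is making the four competing requirements at successor and limit stages compatible: being a $\pi_\alpha$-selector, staying almost inside $X_\alpha$, deciding $A_\alpha$, and retaining positive $\mu$-measure. The deliberate construction of $\mu$ from $\pi$-selectors $S_n$ is what makes successor steps go through; limit stages of cofinality $<\mathfrak{c}$ require a pseudo-intersection/diagonalization in the Polish space of selectors, which is where the hypothesis $\mathrm{cov}(\mathcal{M})=\mathfrak{c}$ (or, under $CH$, countability of the accumulated constraints together with $\mathfrak{d}=\mathfrak{c}$) is used to avoid the meager family of selectors that fail $\mu$-positivity on previously chosen $X_\beta$.
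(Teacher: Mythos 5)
There is a fatal obstruction at the very first step of your construction, and it is exactly the one the paper flags in the paragraph preceding this theorem. Your measure $\mu(A)=\lim_{n\to\mathcal{U}}|A\cap S_n|/|S_n|$ is a (generalized) density measure concentrated on $T=\bigcup_n S_n$, where the $S_n$ are pairwise disjoint finite sets with $|S_n|\to\infty$. Now consider the partition of $\omega$ into finite pieces whose blocks are the sets $S_n$ themselves together with singletons from $\omega\setminus T$. Any $Q$-point must contain a selector $A$ for this partition, and such an $A$ satisfies $|A\cap S_n|\le 1$ for all $n$, hence $\mu([A])=\lim_{\mathcal{U}}1/|S_n|=0$. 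But $x\in\mathrm{supp}(\mu)$ means precisely that $\mu([A])>0$ for \emph{every} $A\in x$. So no $Q$-point can lie in the support of your $\mu$, no matter how cleverly the transfinite induction is run afterwards. (If you insist on defining $Q$-points via interval partitions only, the two definitions coincide for ultrafilters, so the obstruction persists.) In short: a point of the support must extend the filter $\{B:\mu(B)=1\}$, and a $Q$-point cannot extend any density filter --- which is why the paper calls the theorem ``somehow surprising'' and why the measure must be of a genuinely different kind. A secondary, independent problem is that your countably many sets $S_n$ can only be tailored to countably many of the $\mathfrak{c}$ many interval partitions, so the claim ``for every $\alpha$ the set $S_n$ is a $\pi_\alpha$-selector for all $n\ge n_\alpha$'' has no content for $\alpha\ge\omega$.

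The paper's proof sidesteps both problems by not constructing the measure by hand at all: it works in the random real model, quotes a result of Borodulin-Nadzieja and Sobota producing a filter $\mathcal{F}$ with a base of size $\omega_1<\mathfrak{c}$ whose set of ultrafilter extensions supports a (non-atomic) measure, and then applies Canjar's theorem (valid since $\mathfrak{d}=\mathrm{cov}(\mathcal{M})=\omega_1$ there) to extend $\mathcal{F}$ to a $Q$-point. The essential point is that the measure obtained this way is \emph{not} an extension of an asymptotic density, and that the filter one must respect is small relative to $\mathfrak{c}$; note also that this argument does not go through under $\mathsf{CH}$, so your proposed hypotheses would need revisiting even if the measure-construction issue were repaired.
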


\begin{proof} Consider the classical random model, i.e. the model obtained by adding $\omega_2$ random reals to the model of $\mathsf{CH}$.

     By \cite[Theorem 5.6]{PbnSobota} in this model there is a filter $\mathcal{F}$ generated by $\omega_1$ sets such that the set \[ F = \{x\in \omega^\ast\colon x \mbox{ extends } \mathcal{F} \} \]
     supports a measure.

     By \cite[Theorem 3]{Canjar} if $\mathfrak{d}=\mathrm{cov}(\mathcal{M})$, then every filter generated by less than $\mathfrak{c}$ many sets can be extended to a $Q$-point. In the classical random model both $\mathfrak{d}=\omega_1$ and
     $\mathrm{cov}(\mathcal{M}) = \omega_1$ (see e.g. \cite[Table 4]{Blass}) and so we can extend the filter $\mathcal{F}$ to a $Q$-point $x$. Then $x\in \mathrm{supp}(\mu)$.
\end{proof}

\section{Open problems}

We can play with the definition of measure avoiding points in many ways. E.g. we can consider some particular properties of measures and ask about the existence of points avoiding measures with this property. For example, we may ask about points avoiding measures of countable Maharam type or (in the context of $\omega^\ast$) points
avoiding P-measures (also called (AP)-measures, see \cite{Blass2} or \cite{PbnSobota}).

In fact we may put this kind of questions in a much more general framework. Let $X$ be a (compact) topological space and let $(\varphi)$ be a topological property. We say that a point $x\in X$ \emph{avoids spaces with} $(\varphi)$ if for each space
$Y\subseteq X$ enjoying $(\varphi)$ and such that $x\notin Y$
we have $x\notin \overline{Y}$.

In this sense the points avoiding measures are exactly the points avoiding supports of measures and weak P-points are exactly the points avoiding separable spaces. One can think that the property of \emph{not} avoiding spaces with $(\varphi)$ is some
kind of local version of the
property $(\varphi)$. Or, that a point avoiding spaces with $(\varphi)$ is a \emph{single} witness for non $(\varphi)$. E.g. if a space has a weak P-point, then it is not only non-separable but there is a concrete point witnessing its
non-separability.

This approach motivates questions of the following form.

\begin{problem}\label{witness} Let $(\varphi)$ be a topological property. Is it true that whenever a (compact) space $X$ does not have $(\varphi)$, then it contains a point avoiding spaces with $(\varphi)$?
\end{problem}

Very probably the set of properties for which this is an interesting problem is very limited. In this article we have considered points avoiding spaces possessing some chain condition (separability, being a support of a measure, ccc). Perhaps chain
conditions are the first properties to look at in the context of Problem \ref{witness}. 

Distinguishing different chain conditions was a source of many important problems and results in topology (see e.g. \cite{Todorcevic}). It is natural to ask about distinguishing the 'local' versions of chain conditions. E.g. Theorem
\ref{Janek} distinguishes local separability from local support of a measure. 

\begin{problem}\label{chain} Is there a compact space $X$ and $x\in X$ which avoids measures but not ccc spaces (and, particularly, is not a $\mathfrak{c}$-OK-point)? What about other chain conditions?
\end{problem}

Notice that the standard (consistent) example of a small compact space which is ccc but is not separable is the Stone space of the Boolean algebra generated by the Suslin tree. But
it is not hard to see that this space does not contain a point avoiding separable
spaces (and so it is a non-separable space without a point 'witnessing' its non-separability). So, the examples distinguishing some chain conditions do not automatically distinguish their local versions. (However, each ccc compact F-space of weight bigger than $\mathfrak{c}$ necessarily contains a weak P-point, see \cite{Dow}, so we
can distinguish local version of separability and ccc in ZFC).

Problem \ref{chain} is connected to the main problem of this article. 

\begin{problem} Is there a 'simple' ZFC construction of a $\mu$-point?
\end{problem}

In Section \ref{baum} we tried to attack this problem by 'killing' trees witnessing non-atomicity of measures. This approach led us to measure zero ultrafilters, which consistently may not exist, so perhaps this is a dead-end.

We still hope that there might be some not so complicated measure theoretic reason that $\omega^\ast$ contains $\mu$-points. Mathematicians tried to show non-homogeneity of $\omega^\ast$ using combinatorial and topological arguments. We found it
tempting to try to find an argument of measure theoretic origin.

\end{document}